\renewcommand{\geq}{\geqslant}
\renewcommand{\ge}{\geqslant}
\renewcommand{\le}{\leqslant}
\newcommand{\F}{\mathcal F}
\renewcommand{\L}{\mathcal L}
\renewcommand{\H}{\mathcal H}
\newcommand{\Hd}{\mathcal T}
\newcommand{\Q}{\mathcal Q}
\newcommand{\R}{\mathbb R}
\newcommand{\D}{\mathbb D}
\newcommand{\C}{\mathbb C}
\newcommand{\T}{\mathbb T}
\renewcommand{\Im}{\operatorname{Im}}
\renewcommand{\Re}{\operatorname{Re}}
\renewcommand{\Omega}{\varOmega}
\theoremstyle{plain}
\newtheorem{theorem}{Theorem}
\newtheorem{lemma}[theorem]{Lemma}
\newtheorem*{thm*}{Theorem}
\theoremstyle{remark}
\newtheorem*{remark*}{Remark}
\title{Overhanging and touching waves \protect\\in constant vorticity flows}
\author[1]{Vera~Mikyoung~Hur\thanks{E-mail:~verahur@math.uiuc.edu}}
\author[2]{Miles~H.~Wheeler\thanks{E-mail:~mw2319@bath.ac.uk}}
\affil[1]{Department of Mathematics, University of Illinois at Urbana-Champaign \protect\\ Urbana, IL 61801, USA}
\affil[2]{Department of Mathematical Sciences, University of Bath \protect\\ Bath BA2 7AY, UK}
\begin{document}

\maketitle

\begin{abstract}
We show the existence of periodic traveling waves at the free surface of a two dimensional, infinitely deep, and constant vorticity flow, under gravity, whose profiles are overhanging, including one which intersects itself to enclose a bubble of air. Numerical evidence has long suggested such overhanging and touching waves, but a rigorous proof has been elusive. \mbox{Crapper's} celebrated capillary waves in an irrotational flow have recently been shown to yield an exact solution to the problem for zero gravity, and our proof uses the implicit function theorem to construct nearby solutions for weak gravity.
\end{abstract}

\section{Introduction}\label{sec:intro}

We consider periodic traveling waves at the free surface of an incompressible inviscid fluid in two dimensions, under gravity, without the effects of surface tension. When the flow is irrotational, the wave profile is necessarily the graph of a single-valued function \cite{spielvogel} (see also \cite{CSV:constant-vorticity}). In constant vorticity flows, by contrast, numerical investigations (see, for instance, \cite{ss:deep, sp:steep, DH1, DH2}) have revealed profiles with multi-valued height and even profiles which intersect themselves tangentially above the trough to enclose a bubble of air. Constantin, Strauss and V\u{a}rv\u{a}ruc\u{a} \cite{CSV:constant-vorticity} conjectured that such {\em overhanging} and {\em touching} waves indeed exist.
Here we give a proof of this conjecture.

Crapper \cite{crapper} discovered a remarkable family of exact solutions to the capillary wave problem---that is, nonzero surface tension and zero gravity---in an irrotational flow, whose profiles become more rounded as the amplitude increases, opposite to gravity waves, so that overhanging profiles appear, limited by a touching wave. See Figure~\ref{fig:crapper}. Akers, Ambrose and Wright \cite{AAW:overhanging} then employed a perturbation method to construct nearby solutions for sufficiently weak gravity, and in particular overhanging capillary-gravity waves. C\'{o}rdoba, Enciso and Grubic \cite{CEG:touching} took matters further and constructed a touching wave. 
Recently, the authors \cite{HW:rapids} (see also \cite{HVB}) showed that Crapper's capillary waves also give the profiles of periodic traveling waves in constant vorticity flows, without the effects of gravity and surface tension. We follow a perturbation argument, similar to \cite{AAW:overhanging, CEG:touching, ce:existence} and others, to construct overhanging and touching waves for nonzero gravity. 

But it is the rotational effect which generates overhanging and touching profiles for our problem, rather than the capillary effect \cite{AAW:overhanging, CEG:touching, ce:existence}. Although the unperturbed fluid surface is the same as Crapper's wave, the fluid flow beneath the surface is completely different, and so are the governing equations. See \cite{HW:rapids} for more discussion, and also see \cite{bp:stability} for a study of the stability of the solutions with zero gravity.

For the capillary wave problem, Okamoto and Sh\={o}ji \cite{OS:paper, OS:book} produced closed-form recurrence relations among the Fourier coefficients for the linearized operator about Crapper's wave, which enabled \cite{AAW:overhanging, CEG:touching, ce:existence} and others to work out their perturbation arguments. Unfortunately, such relations seem unwieldy for nonzero constant vorticity and zero surface tension. Instead we reformulate our problem for a holomorphic function in the unit disk, for which the zero-gravity exact solution is given as a rational function (see \eqref{def:w(A)}) and the commutator, first introduced in \cite{BDT1, BDT2} for zero vorticity, and its linearized operator can be evaluated by means of the calculus of residues (see \eqref{def:Q} and \eqref{def:Qw}). The novelty of our approach is that, to establish the invertibility of the linearized operator, we relate it the problem of finding holomorphic solutions to a complex ODE with meromorphic coefficients. In retrospect, this technique seems quite natural, but so far we have been unable to find other examples of its use in the literature. We believe that similar methods could be applied to a much wider range of fluids problems which possess explicit solutions given in terms of conformal mappings.

We begin in Section~\ref{sec:prelim} by stating the problem and the results. In Section~\ref{sec:reformulation} we reformulate the problem in conformal coordinates and, in turn, for a holomorphic function in the unit disk. In Section~\ref{sec:IFT} we employ the implicit function theorem to prove our results. 
Section~\ref{sec:extensions} discusses how one can possibly take matters further to finite depth, point vortices, and hollow vortices, among others. We pause to remark that Crapper's waves also make exact solutions for point vortices, rather than constant vorticity, without the effects of gravity or surface tension \cite{CR:pointvortex}. Crowdy and his collaborators \cite{crowdy:hollow-vortex, WC:hollow-vortex, CNK;hollow-vortex} discovered exact solutions for non-rotating hollow vortices with the effects of surface tension and showed that, interestingly, the same conformal mapping makes exact solutions for rotating hollow vortices with $N$-fold symmetry. Appendix~\ref{sec:appn} gives a summary of \cite{HW:rapids} and, importantly, corrects errors in \cite{HW:rapids}. 
 
\section{Preliminaries and the statement of the results}\label{sec:prelim}

\subsection{Stream function formulation}\label{sec:stream}

We consider a two dimensional, infinitely deep, and constant vorticity flow of an incompressible inviscid fluid, under gravity, without the effects of surface tension, and periodic traveling waves at the fluid surface. We assume for simplicity that the fluid has the unit density. Suppose for definiteness that in Cartesian coordinates, waves propagate in the $x$ direction and gravity acts in the negative $y$ direction. In a frame of reference moving with a constant velocity, suppose that the fluid flow is stationary and occupies a region $D$ in the $(x,y)$ plane, bounded above by a free surface~$S$.

Let $\psi(x,y)$ denote a stream function so that $(\psi_y,-\psi_x)$ is the velocity of the fluid, and $\psi$ satisfies
\begin{subequations}\label{eqn:stream}
\begin{alignat}{2}
&\nabla^2\psi=-\omega && \text{in $D$}, \label{eqn:vorticity} \\ 
&\psi=0 &&  \text{on $S$}, \label{eqn:kinematic bc} \\ 
&\tfrac12|\nabla\psi|^2+gy=b && \text{on $S$}, \label{eqn:dynamic bc}\\
&\nabla\psi-(0,-\omega y-c)\to(0,0) \qquad && \text{as $y\to-\infty$} \label{eqn:infty bc}
\end{alignat}
\end{subequations}
for some $c>0$, the wave speed. Here $\omega$ denotes the vorticity and we assume that it is constant in $D$. Note that $S$ is a free boundary, and \eqref{eqn:kinematic bc} and \eqref{eqn:dynamic bc} are the kinematic and dynamic boundary conditions, where $g\geq0$ is the gravitational constant, and $b\in\mathbb{R}$ the Bernoulli constant. For zero vorticity, that is, $\omega=0$, \eqref{eqn:infty bc} expresses that there is no motion of the fluid at the infinite bottom. 
Additionally we assume that $D$ and $\psi$ are $2\pi/k$ periodic in the $x$ direction for some wave number $k>0$, and symmetric about the vertical lines below the crest and the trough.

Introducing dimensionless variables\footnote{Rather than introducing new notation for all the variables, we choose to write, for instance, $x\mapsto kx$. This is to be read `$x$ is replaced by $kx$', so that hereafter the symbol $x$ will mean a dimensionless variable.} 
\[
x\mapsto kx, \qquad y\mapsto ky, \qquad \psi \mapsto (k/c)\psi,
\]
and dimensionless parameters
\[
\Omega=\omega/ck,\qquad G=g/kc^2,\qquad B=b/c^2,
\]
we can rewrite \eqref{eqn:stream} more conveniently as
\begin{equation}\label{eqn:stream0}
\begin{aligned}
&\nabla^2\psi=-\Omega && \text{in $D$}, \\ 
&\psi=0 && \text{on $S$}, \\ 
&\tfrac12|\nabla\psi|^2+Gy=B && \text{on $S$}, \\
&\nabla\psi-(0,-\Omega y-1)\to(0,0)\qquad && \text{as $y\to-\infty$}.
\end{aligned}
\end{equation}
Suppose that the fluid surface is given parametrically as
\begin{equation}\label{def:S}
S=\{(x(\alpha),y(\alpha)): \alpha\in\mathbb{R}\}.
\end{equation}
The periodicity and symmetry conditions become
\begin{equation}\label{eqn:symm(z)}
  \begin{aligned}
  &x(\alpha+2\pi)=x(\alpha)+2\pi&&\text{and}\quad y(\alpha+2\pi)=y(\alpha),\\
  &x(-\alpha)=-x(\alpha)&&\text{and}\quad y(-\alpha)=y(\alpha) 
  \end{aligned}
\end{equation}
for all $\alpha\in\R$, and 
\[
\psi(-x,y)=\psi(x,y)=\psi(x+2\pi,y)\qquad \text{for all $(x,y)\in D$}.
\]

\subsection{Exact solution for zero gravity}\label{sec2:crapper}

In what follows, we identify $\R^2$ with $\C$ whenever it is convenient to do so and employ the notation $z=x+iy$. 

For zero gravity, that is, $G=0$, the authors \cite{HW:rapids} recently showed that 
\begin{equation}\label{def:z(A)}
z(\alpha;A):=\alpha-\frac{4iAe^{-i\alpha}}{1+Ae^{-i\alpha}},
\end{equation}
together with
\begin{equation}\label{def:omega(A)} 
\Omega(A):=\frac{1-A^2}{1-3A^2}\quad\text{and}\quad 
B(A):=\frac12\left(\frac{1+A^2}{1-3A^2}\right)^2,
\end{equation}
make an exact solution to \eqref{eqn:stream0}--\eqref{eqn:symm(z)} for an appropriate stream function (see \eqref{eqn:psi}), depending on the real parameter $A$. 
But, unfortunately, there were sign errors, among others, causing some equations in \cite{HW:rapids} to appear incorrect. In Appendix~\ref{sec:appn} we detail how to correct these errors. 

Surprisingly, the same fluid surface also makes Crapper's exact solution to the capillary wave problem (nonzero surface tension and zero gravity) in an irrotational flow for an appropriate value of the surface tension coefficient, depending on $A$ \cite{crapper}. See also \cite{HW:rapids}.

\begin{figure}
    \centering
    \includegraphics[scale=1]{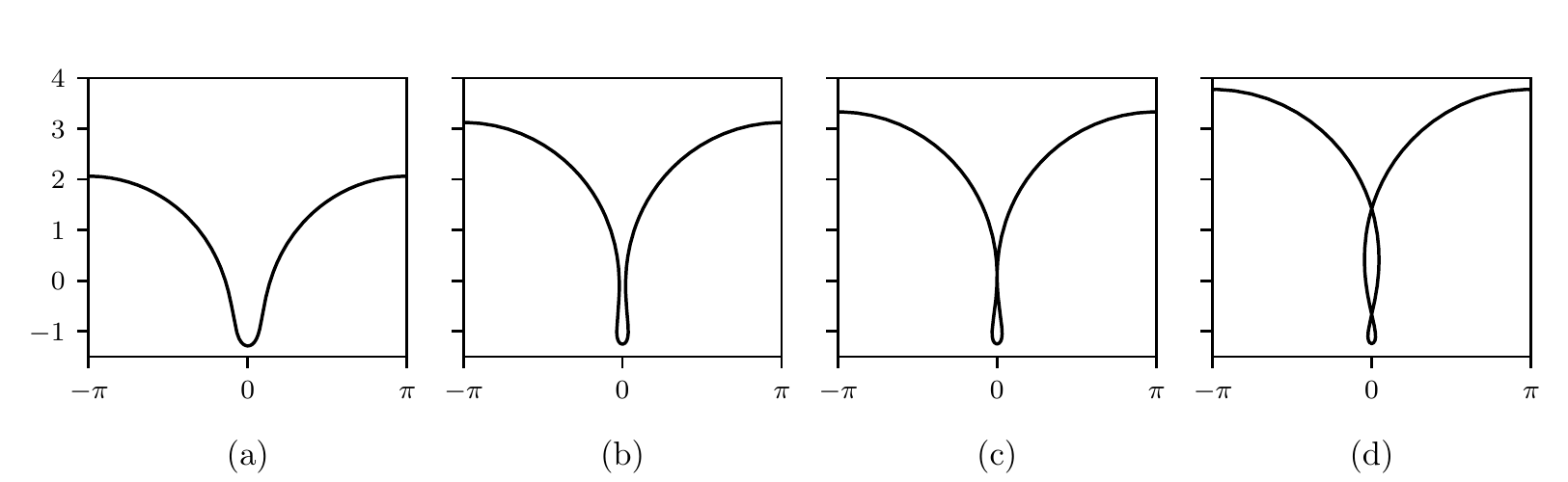}
    \caption{The profiles of \eqref{def:z(A)} in the $(x,y)$ plane for four values of $A$. (a)~$A=0.8A_{\max} < \sqrt{2}-1$ (see \eqref{def:Amax}), and the profile is not overhanging. 
    (b)~$A=0.97A_{\max} > \sqrt{2}-1$, and it is overhanging. 
    (c)~$A=A_{\max}$. The profile intersects itself tangentially at one point of the trough line. 
    (d)~$A=1.06A_{\max}$, and it intersects itself transversely at two points of the trough line. The fluid surfaces in the panels (a)--(c) give rise to physical solutions to \eqref{eqn:stream0}--\eqref{eqn:symm(z)} but (d) does not. 
    }
    \label{fig:crapper}
\end{figure}

Figure~\ref{fig:crapper} shows the profiles of \eqref{def:z(A)} for four values of $A$ in the $(x,y)$ plane in the range $x\in[-\pi,\pi]$. By symmetry, it suffices to take $A\geq0$. When $A$ is small, the fluid surface is not overhanging, that is, $y$ can be given as a function of $x$. See, for instance, Figure~\ref{fig:crapper}(a). When $A$ is sufficiently large, on the other hand, the fluid surface intersects itself transversely at two points of the trough line and, hence, the fluid flow becomes multi-valued, giving rise to a physically `unrealistic' solution to \eqref{eqn:stream0}--\eqref{eqn:symm(z)}. See, for instance, Figure~\ref{fig:crapper}(d). A straightforward calculation reveals that the profile of \eqref{def:z(A)} is not overhanging as long as
\begin{equation}\label{eqn:A=overhang}
  A<\sqrt{2}-1 = 0.4142135623730950\ldots.
\end{equation}
Recall \cite{crapper} that it does not intersect itself so long as
\begin{equation}\label{eqn:A=touching}
A<A_{\max}=0.4546700164520109\dots,
\end{equation}
where 
\begin{equation}\label{def:Amax}
A_{\max}:=\max_{\alpha\in[-\pi,\pi]}\left(\frac{2\sin\alpha}{\alpha}-\cos\alpha\right)
-\sqrt{\left(\max_{\alpha\in[-\pi,\pi]}\left(\frac{2\sin\alpha}{\alpha}-\cos\alpha\right)\right)^2-1}.
\end{equation}
When $\sqrt{2}-1<A<A_{\max}$, the profile of \eqref{def:z(A)} is overhanging but does not intersect itself. See, for instance, Figure~\ref{fig:crapper}(b). When $A=A_{\max}$, it intersects itself tangentially at one point of the trough line, enclosing a bubble of air, namely, a touching wave. See Figure~\ref{fig:crapper}(c). 

In what follows we restrict the attention to $A \in [0,1/2)$, which includes all the physical solutions ($A \le A_{\max}$) and some nonphysical solutions ($A > A_{\max}$). We remark that both the dimensionless vorticity parameter $\Omega(A)$ and the crest-to-trough vertical distance $\Im(z(\pi;A)-z(0;A))$ are strictly increasing with $A\in[0,1/2)$.

\subsection{Statement of the results}\label{sec:result}

The aim here is to construct solutions to \eqref{eqn:stream0}--\eqref{eqn:symm(z)} nearby \eqref{def:z(A)} and \eqref{def:omega(A)} for small but nonzero values of $G$, particularly, overhanging and touching waves.

Below we state our results.

\begin{theorem}[Overhanging waves]\label{thm:overhang}
For each $A \in (\sqrt 2-1,A_{\max})$ (see \eqref{def:Amax}) and $G$ sufficiently small, \eqref{eqn:stream0}--\eqref{eqn:symm(z)} has a solution, where $\Omega=\Omega(A)$ and $B=B(A)$ are in \eqref{def:omega(A)}, whose fluid surface does not intersect itself but is overhanging, that is, $y$ cannot be given as a function of $x$.
\end{theorem}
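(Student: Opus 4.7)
The plan is to apply the implicit function theorem to a reformulation of \eqref{eqn:stream0}--\eqref{eqn:symm(z)} in which the unknown is a holomorphic function $w$ on the unit disk $\D$ and the zero-gravity exact solution is the rational function $w(A)$ of \eqref{def:w(A)} corresponding to \eqref{def:z(A)}. Following the outline of Section~\ref{sec:reformulation}, I would first pass to conformal variables that map the fluid domain $D$ onto $\D$ with the free surface $S$ corresponding to $\T$; in these coordinates the vorticity equation and the kinematic boundary condition become automatic, while the dynamic boundary condition \eqref{eqn:dynamic bc} together with the far-field condition \eqref{eqn:infty bc} reduce to a single nonlinear equation on $\T$ of the schematic form $F(w,G)=0$ involving the commutator-type operator $\Q$ of \eqref{def:Q}, \eqref{def:Qw}. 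By \cite{HW:rapids}, $F(w(A),0)=0$ for every $A\in[0,1/2)$, so the task is to solve $F(w,G)=0$ for $w$ near $w(A)$ once $G$ is small.

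Working in a Banach space of holomorphic functions on $\D$ with real, even Fourier coefficients and some H\"older regularity of the trace on $\T$ (reflecting the symmetry \eqref{eqn:symm(z)}), I would verify that $F$ is smooth as a map between such spaces; the serious hypothesis of the implicit function theorem is then invertibility of the Fr\'echet derivative $L_A:=D_w F(w(A),0)$. Since $\Q$ is evaluable by the calculus of residues, $L_A$ has an explicit form built from the rational function $w(A)$, and the novelty flagged in the introduction is to recognise that, after multiplication by a suitable meromorphic factor, the equation $L_A u = f$ is equivalent to finding a holomorphic solution on $\D$ of a complex ODE with meromorphic coefficients. Such an ODE can be solved by an explicit integrating factor; one then checks, via the residue theorem, that the resulting operator is a genuine right inverse within the prescribed symmetry class, and that the associated homogeneous ODE has no nontrivial solution therein, yielding bijectivity.

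The invertibility step is where I expect the main difficulty, because as $A$ ranges over $(\sqrt 2-1, A_{\max})$ the pole structure of the meromorphic coefficients in the ODE changes with $A$---in particular, singularities of \eqref{def:z(A)} associated with $1+Ae^{-i\alpha}$ interact with $\T$---so one must show uniformly that the integrating-factor formula produces single-valued holomorphic solutions, a requirement amounting to the vanishing of certain residues built from $w(A)$. Once invertibility of $L_A$ is established, the implicit function theorem supplies, for each such $A$, a family $w(A,G)$ with $w(A,0)=w(A)$ defined for $\abs{G}<G_0(A)$, and smooth dependence of $w$ on $G$ in the chosen Banach space implies $C^1$ convergence of the corresponding surface parametrization to $z(\alpha;A)$ as $G\to 0$. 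The geometric conclusions of Theorem~\ref{thm:overhang} then follow because overhangingness (failure of $\Re z'(\alpha)$ to be positive somewhere) holds strictly for $A>\sqrt 2-1$ at $G=0$, and the absence of self-intersections holds strictly for $A<A_{\max}$ at $G=0$; both are $C^1$-open conditions on the parametrized surface and hence persist for $G$ sufficiently small.
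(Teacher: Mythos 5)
Your overall architecture is the same as the paper's: reformulate in conformal variables and then for a holomorphic function on $\D$ using the commutator $\Q$, apply the analytic implicit function theorem about the exact solution $w(A)$ of \eqref{def:w(A)}, reduce invertibility of the linearization to a complex ODE with meromorphic coefficients, and conclude by noting that both geometric properties (no self-intersection for $A<A_{\max}$, failure of graphicality for $A>\sqrt2-1$) are open under small $C^1$ perturbations of the parametrized surface. The endgame is essentially identical to the paper's proof of Theorem~\ref{thm:overhang} (the paper checks non-self-intersection via the function $(z(\alpha)-z(\alpha'))/(\alpha-\alpha')$ on the compact set $\T^2$ and persists an overhanging configuration of points; your ``$x_\alpha<0$ somewhere'' plus injectivity argument is an acceptable variant). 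Two small corrections to your setup: the symmetry $w(\overline\zeta)=-\overline{w(\zeta)}$ makes the Taylor coefficients purely imaginary, not real; and for $A\in(\sqrt2-1,A_{\max})\subset[0,1/2)$ the singularity at $\zeta=-1/A$ lies well outside $\overline\D$, so there is no ``interaction with $\T$'' and no uniformity in $A$ is required, since $A$ is fixed before the implicit function theorem is invoked.

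The genuine gap is in your invertibility step. First, the equation $\L(A)u=f$ only prescribes, on $\partial\D$, the imaginary part of a function that is meromorphic in $\D$ with an a priori unknown (purely imaginary) residue at $\zeta=-A$ and the nonlocal value $u(-A)$ entering through \eqref{def:Qw}; it is not literally ``equivalent'' to an interior ODE. The paper makes this conversion only for the homogeneous problem, where a maximum-principle/Schwarz argument gives $\tilde f\equiv0$ in \eqref{eqn:f=0}; for a general right-hand side $f\in Y$ you would additionally have to solve a Schwarz problem and track the free constants, which your sketch does not address. Second, even granting the ODE, the integrating factor is $\zeta^{(1-A^2)/(1+A^2)}$ times a rational function (see the Remark following Lemma~\ref{lem:invertible}); it is multivalued in $\D$, so ``solve by an explicit integrating factor and check residues'' does not directly produce a single-valued holomorphic right inverse—indeed, this multivaluedness, equivalently the non-integer residue $\operatorname{Res}(p,0)=(1-A^2)/(1+A^2)\in(0,1)$, is exactly what the paper exploits to show the kernel is trivial, not to construct solutions. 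The paper avoids proving surjectivity directly: Lemma~\ref{lem:Fredholm} shows $\L(A)\colon X\to Y$ is Fredholm of index zero (Schauder estimate for the principal Riemann--Hilbert part \eqref{def:L0}, compactness of $X$ in $C^0(\overline\D)$, and deformation of the index to the explicit case $A=0$), after which triviality of the kernel (Lemma~\ref{lem:invertible}, via the residue identities of Lemma~\ref{lem:elementary}, which force $v(-A)=0$ and $f_{-1}=0$ and then yield a contradiction) gives invertibility. Your plan needs either this index-zero argument or a genuine replacement for it; as written, the construction of a right inverse would fail.
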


\begin{theorem}[Touching waves]\label{thm:touching}
For $G$ sufficiently small, there exists a solution to \eqref{eqn:stream0}--\eqref{eqn:symm(z)} for which $S$ intersects itself tangentially along the trough line, enclosing a bubble of air.
\end{theorem}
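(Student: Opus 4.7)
The plan is to follow the strategy of C\'ordoba, Enciso and Grubic \cite{CEG:touching} for capillary-gravity waves, but in the conformal-disk formulation of Section~\ref{sec:reformulation}: apply the implicit function theorem at the exact touching Crapper wave $(A,G)=(A_{\max},0)$, treating both parameters as free and enforcing the tangential self-intersection as an extra scalar equation.

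The construction behind Theorem~\ref{thm:overhang} takes place in a function space for the holomorphic function in the unit disk that does not detect whether the image curve $S$ has a geometric self-intersection. One therefore expects it to yield a smooth two-parameter family of solutions $(A,G)\mapsto z(\,\cdot\,;A,G)$ on a full neighborhood of $(A_{\max},0)$, comprising both non-touching overhanging waves ($A<A_{\max}$) and waves that self-intersect as curves in $\C$ ($A\ge A_{\max}$). Accepting this, by the symmetry $x(-\alpha)=-x(\alpha)$ in \eqref{eqn:symm(z)}, any self-intersection of $S$ on the axis $x=0$ must occur at a symmetric pair $\pm\alpha_*$ whose tangent vectors $(\pm x'(\alpha_*),y'(\alpha_*))$ are parallel precisely when $x'(\alpha_*)=0$. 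A direct computation from \eqref{def:z(A)} at $G=0$, $A=A_{\max}$ shows that $x(\,\cdot\,;A_{\max},0)$ has a unique nondegenerate local minimum at some $\alpha_*\in(\pi/2,\pi)$ with $x(\alpha_*)=0$ and $x''(\alpha_*)>0$. The implicit function theorem applied to $\partial_\alpha x$ then produces a smooth function $\alpha_*(A,G)$ near $(A_{\max},0)$, and I set
\begin{equation*}
F(A,G):=x\bigl(\alpha_*(A,G);\,A,G\bigr).
\end{equation*}
The fluid surface is then a tangentially touching wave enclosing a single bubble, with the touching point at $\pm\alpha_*$, precisely when $F(A,G)=0$ and $x''(\alpha_*)>0$; the latter persists by continuity.

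It remains to invoke the implicit function theorem once more, on $F(A,G)=0$, to solve for $A$ as a function of $G$. Since $\partial_\alpha x(\alpha_*;A_{\max},0)=0$, the chain rule gives $\partial_A F(A_{\max},0)=\partial_A x(\alpha_*;A_{\max},0)$ evaluated on the Crapper family at $G=0$. Differentiating \eqref{def:z(A)} yields
\begin{equation*}
\partial_A x(\alpha;A)=-\frac{4(1-A^2)\sin\alpha}{(1+2A\cos\alpha+A^2)^2},
\end{equation*}
which is strictly negative at $(\alpha_*,A_{\max})$ because $A_{\max}\in(0,1)$ and $\sin\alpha_*>0$. Hence there is a smooth curve $G\mapsto A(G)$ with $A(0)=A_{\max}$ and $F(A(G),G)=0$ for all sufficiently small $G$, and the corresponding solution is the desired touching wave.

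The principal obstacle is the preliminary step: showing that the linearized operator from the proof of Theorem~\ref{thm:overhang} remains invertible in a suitable function space at and past $A=A_{\max}$, so that the implicit function theorem really produces solutions on an open neighborhood of $(A_{\max},0)$ rather than only on $(\sqrt{2}-1,A_{\max})\times[0,G_0)$. Once this uniform-in-$A$ invertibility is in place, the remainder of the argument reduces to the two finite-dimensional applications of the implicit function theorem described above, together with the explicit Crapper computations.
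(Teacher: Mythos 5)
Your overall strategy coincides with the paper's: work with the two-parameter family $W(G,A)$ of Theorem~\ref{thm:IFT} near $(A_{\max},0)$ --- including the nonphysical, self-intersecting profiles for $A>A_{\max}$ --- and then, for each small $G$, select $A(G)\approx A_{\max}$ so that the profile self-intersects tangentially. The step you flag as the ``principal obstacle'' is in fact already settled: Lemma~\ref{lem:invertible} gives invertibility of $\L(A)$ for every $A\in(0,1/2)$, and $A_{\max}<1/2$, so Theorem~\ref{thm:IFT} provides the real-analytic family on a full neighborhood of $(A_{\max},0)$; injectivity of the profile is never used in the functional-analytic setup (only $w\in U$, i.e.\ \eqref{eqn:stagnation}), which is precisely why the paper retains the range $A>A_{\max}$. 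Where you genuinely differ is the selection step: the paper observes that $W(G,A_{\max}-\varepsilon/2)$ is non-self-intersecting while $W(G,A_{\max}+\varepsilon/2)$ intersects itself transversally, and concludes by a continuity/intermediate-value argument whose details it defers to \cite[Proposition~10]{CEG:touching}; you instead track the critical point $\alpha_*(A,G)$ of $x$ and solve the scalar equation $x(\alpha_*(A,G);A,G)=0$ by the implicit function theorem, using the explicit Crapper derivative $\partial_A x(\alpha_*;A_{\max})<0$. This is a legitimate, somewhat more explicit route (it produces an analytic curve $G\mapsto A(G)$ rather than mere existence), and it is close in spirit to the cited CEG argument. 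Two points to tighten: the tangents at the symmetric pair $\pm\alpha_*$ are parallel when $x'(\alpha_*)\,y'(\alpha_*)=0$, not only when $x'(\alpha_*)=0$ --- harmless here because $y'(\alpha_*)\neq 0$ at the unperturbed touching point by \eqref{eqn:stagnation}, but it should be stated; and you still need the compactness argument (as in the proof of Theorem~\ref{thm:overhang}) showing that away from $\pm\alpha_*$ the perturbed profile has no self-intersections, so the constructed tangential contact is the only one and a single bubble is enclosed. Finally, your use of $x''(\alpha_*)>0$ and of the implicit function theorem for $\partial_\alpha x$ is exactly where the $C^{3+a}$ regularity of $w$ enters, as the paper remarks at the end of its proof.
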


In Theorem~\ref{thm:touching}, $\Omega=\Omega(A)$ and $B=B(A)$ for $A\approx A_{\max}$. 

We emphasize that Theorems~\ref{thm:overhang} and \ref{thm:touching} are the first to rigorously establish that overhanging and touching profiles exist for surface gravity waves. There is persuasive numerical evidence (see, for instance, \cite{DH1,DH2,DH3} and references therein) of their existence. Also there is a global bifurcation result \cite{CSV:constant-vorticity} which allows for overhanging profiles, although the result is incapable of determining whether such profiles actually exist. For zero vorticity, that is, $\omega=0$, by contrast, overhanging waves cannot exist. See, for instance, \cite{CSV:constant-vorticity} for more discussion. 

The proof of Theorems~\ref{thm:overhang} and \ref{thm:touching} is based on the exact solution for $G=0$, discussed in Section~\ref{sec2:crapper}, and uses the implicit function theorem to construct nearby solutions for small $G$. The same strategy has been implemented for the existence of overhanging and touching capillary-gravity waves (see, for instance, \cite{AAW:overhanging, CEG:touching, ce:existence}), based instead on Crapper's exact solution to the capillary wave problem in an irrotational flow. Although the zero-gravity fluid surface is the same, the physical problem here is completely different from the capillary-gravity wave problem. Actually, the fluid flows are completely different. See \cite{HW:rapids} for more discussion. The linearized operator of the capillary wave problem about Crapper's wave was treated in \cite{OS:paper,OS:book}, examining closed-form recurrence relations among the Fourier coefficients. But such an approach seems unwieldy for our problem. We develop an alternative approach, relating the linearized operator to a complex ODE with meromorphic coefficients, whose solvability can be studied by means of the calculus of residues. 

\section{Reformulation}\label{sec:reformulation}

\subsection{Reformulation via conformal mapping}\label{sec:conformal}

We introduce 
\begin{equation}\label{def:conformal}
z=z(\alpha+i\beta),
\end{equation}
which maps $\R\times(-\infty,0)$ to $D$ conformally, $\mathbb{T}\times(-\infty,0)$ to $(\mathbb{T}\times\R)\cap D$, and satisfies 
\[
z(\alpha+i\beta)-(\alpha+i\beta)\to0\qquad\text{as $\beta\to-\infty$}.
\]
Suppose that \eqref{def:conformal} extends to map $\mathbb{R}\times(-\infty,0]$ to $D\cup S$ continuously. This allows us to reformulate \eqref{eqn:stream0} in `conformal coordinates' as
\begin{equation}\label{eqn:bernoulli}
(1+\Omega(y+y\H y_\alpha-\H(yy_\alpha)))^2=(B-2Gy)((1+\H y_\alpha)^2+y_\alpha^2)
\qquad \text{for $\beta=0$}.
\end{equation}
Here $\H$ denotes the periodic Hilbert transform: for instance, for $y\in L^2(\mathbb{T})$, 
\begin{equation}\label{def:H}
\H y(\alpha)=\frac1{2\pi}\operatorname{PV}\int_\T y(\alpha')\cot\left(\frac{\alpha-\alpha'}2\right)~ d\alpha',
\end{equation}
where $\operatorname{PV}$ stands for Cauchy's principal value integral. Alternatively, 
\begin{equation}\label{def:H'}
\H e^{in\alpha}=-i\operatorname{sgn}(n)e^{in\alpha},\qquad n\in\mathbb{Z}.
\end{equation}
See, for instance, \cite{DH1,DH2} for details. 
Here and in what follows, we regard $y$ as a real-valued function of $\alpha\in\mathbb{R}$ whenever it is convenient to do so. Throughout we use subscripts for partial derivatives and primes for variables of integration. We pause to remark that $z(\alpha)=\alpha+(\H+i)y(\alpha)$. In other words, $S=\{(\alpha+\H y(\alpha), y(\alpha)):\alpha\in\R\}$. 

For zero vorticity, that is, $\Omega=0$, \eqref{eqn:bernoulli} becomes
\[
1=(B-2Gy)(y_\alpha^2+y_\beta^2)\qquad \text{for $\beta=0$},
\]
and indeed \eqref{eqn:stream0} can be formulated as (local) elliptic boundary value problem for $y$ in a fixed domain. For nonzero vorticity, on the other hand, the nonlocal term $\H(yy_\alpha)$ causes technical difficulties. But the commutator formula \cite{BDT1,BDT2} 
\begin{equation}\label{eqn:Q(a)}
  (y\H y_\alpha-\H(yy_\alpha))(\alpha)=\frac1{8\pi}\int_\T
  (y(\alpha)-y(\alpha'))^2\csc^2\left(\frac{\alpha-\alpha'}2\right)~d\alpha'
\end{equation}
turns out to be instrumental. See \eqref{def:Q} and \eqref{def:Qw}. 

A (smooth) solution of \eqref{eqn:bernoulli} gives rise to a solution of \eqref{eqn:stream0}, provided that 
\begin{gather}
\text{$\alpha\mapsto z(\alpha)$ is injective for all $\alpha\in\R$ \label{eqn:injective}}
\intertext{and}
\text{$z_\alpha(\alpha) \neq 0$ for all $\alpha \in \R$}.\label{eqn:stagnation}
\end{gather}
We refer the reader to, for instance, \cite{DH1,DH2} for details. 

Recall \cite{DH1,DH2} that \eqref{eqn:injective} states that the fluid surface does not intersect itself, while \eqref{eqn:stagnation} ensures that \eqref{def:conformal} is well-defined throughout $\R\times(-\infty,0]$. There is numerical evidence \cite{DH1,DH2} (see also \cite{DH3}) that solutions of \eqref{eqn:bernoulli} can be found even when \eqref{eqn:injective} fails to hold, although such solutions would be nonphysical because the fluid surface intersects itself and the fluid flow becomes multi-valued. Such nonphysical solutions can nevertheless be useful, and indeed we will make use of them to construct a touching wave for nonzero gravity. 

When \eqref{eqn:stagnation} fails to hold, on the other hand, the fluid surface develops a stagnation point, where the velocity of the fluid vanishes in the moving frame of reference. There is numerical evidence \cite{DH1,DH2} that for any value of $\Omega$, the solutions of \eqref{eqn:bernoulli} are ultimately limited by an `extreme' wave (in an appropriate function space, for instance, the (amplitude) $\times$ (wave speed) plane), whose profile has a stagnation point at the crest, enclosing a $120^\circ$ angle. 
Extreme waves are beyond the scope of this work, and throughout we will require that \eqref{eqn:stagnation} holds true. 

\subsection{Reformulation for holomorphic functions in the unit disk}\label{sec:w(zeta)}

It is convenient to introduce
\begin{equation}\label{def:zeta}
\zeta = e^{-i(\alpha+i\beta)}, 
\end{equation}
which maps $\T\times (-\infty,0)$ to $\D:=\{\zeta\in\C:|\zeta|<1\}$ and $\T\times\{0\}$ to $\partial\D$, by \eqref{eqn:symm(z)}. Abusing notation, we write \eqref{def:conformal} as $z(\zeta)$, and let 
\begin{equation}\label{def:w}
z(\zeta)=i\log\zeta+w(\zeta).
\end{equation}
Note from \eqref{def:conformal} and \eqref{eqn:symm(z)} that $w$ is a single-valued holomorphic function of $\zeta\in\D$, although $z$ is not, and that 
\[
w(\overline\zeta)=-\overline{w(\zeta)}.
\]
We can then rewrite \eqref{eqn:bernoulli} as 
\begin{equation}\label{eqn:w}
\frac12\frac{(1+\Omega(\Im w+\Q(w)))^2}{|1-i\zeta w_\zeta|^2}=B-G\Im w
\qquad \text{for $|\zeta|=1$},
\end{equation}
where after the change of variables, \eqref{eqn:Q(a)} becomes
\begin{equation}\label{def:Q}
\Q (w(\zeta)):=-\frac{\zeta}{2\pi i}\ointctrclockwise_{|\zeta'|=1}
\left(\frac{\Im(w(\zeta)-w(\zeta'))}{\zeta-\zeta'}\right)^2~d\zeta'
\qquad\text{for $|\zeta|=1$}.
\end{equation}
Of particular usefulness for our purpose is that when $w$ is meromorphic in the unit disk, one can evaluate the right hand side of \eqref{def:Q} by means of the calculus of residues. See Section~\ref{sec3:crapper}. Note that \eqref{eqn:stagnation} becomes $|1-i\zeta w_\zeta|^2=|z_\alpha|^2\neq0$ for $|\zeta|=1$. 

\subsection{The exact solution revisited}\label{sec3:crapper}

For zero gravity, that is, $G=0$, we deduce from Sections~\ref{sec2:crapper} and \ref{sec:w(zeta)} that, for any $A\in[0,1/2)$, 
\begin{equation}\label{def:w(A)}
w(\zeta;A):=-\frac{4iA\zeta}{1+A\zeta}
\end{equation}
together with \eqref{def:omega(A)} are an exact solution of \eqref{eqn:w}. Clearly, $w(A)$ is holomorphic in $\D$ and $w(\overline{\zeta};A)=-\overline{w(\zeta;A)}$. 

When $\sqrt{2}-1<A<A_{\max}$, where $A_{\max}$ is in \eqref{def:Amax}, the fluid surface given by \eqref{def:w}, where $w$ is in \eqref{def:w(A)}, is overhanging but does not intersect itself. When $A=A_{\max}$, it intersects itself tangentially at one point over the period and can make sense of a solution of \eqref{eqn:stream0}--\eqref{eqn:symm(z)}. When $A>A_{\max}$, on the other hand, it intersects itself transversely at two points, giving rise to a nonphysical solution of \eqref{eqn:stream0}--\eqref{eqn:symm(z)}.

We give some details on how \eqref{def:w(A)} solves \eqref{eqn:w}, for the sake of completeness and also for future reference. We restrict attention to $A\in[0,1/2)$. Since $\overline{\zeta}=1/\zeta$ for $|\zeta|=1$, 
\begin{equation}\label{eqn:Im(w(A))}
\Im w(\zeta;A)=2\left(-1+\frac{A}{\zeta+A}+\frac{1/A}{\zeta+1/A}\right)\qquad \text{for $|\zeta|=1$}.
\end{equation}
Substituting \eqref{eqn:Im(w(A))} into \eqref{def:Q}, we evaluate the integral by means of the calculus of residues to arrive at 
\[
\Q(w(\zeta;A))=-8(A-1/A)^{-2}\left(\frac{A}{\zeta+A}-\frac{1/A}{\zeta+1/A}\right)
\]
for $|\zeta|=1$, whence
\begin{equation}\label{eqn:crapper1}
1+\Omega(A)(\Im w(\zeta;A)+\Q(w(\zeta;A)))=(1-2\Omega(A))\frac{(\zeta-A)(\zeta-1/A)}{(\zeta+A)(\zeta+1/A)}
\end{equation}
for $|\zeta|=1$. On the other hand, 
\begin{equation}\label{eqn:crapper2}
  1-i\zeta w_{\zeta}(\zeta;A)=\left(\frac{\zeta-1/A}{\zeta+1/A}\right)^2,
\end{equation}
whence
\begin{equation}\label{eqn:|z'(A)|}
|1-i\zeta w_\zeta(\zeta;A)|^2=\left(\frac{(\zeta-A)(\zeta-1/A)}{(\zeta+A)(\zeta+1/A)}\right)^2 \qquad\text{for $|\zeta|=1$}.
\end{equation}
A straightforward calculation reveals that \eqref{eqn:w} holds true for \eqref{def:omega(A)} and $G=0$. Note that $|1-i\zeta w_\zeta(A)|^2\neq0$ for $|\zeta|=1$. 

\section{Proof of Theorems~\ref{thm:overhang} and \ref{thm:touching}}\label{sec:IFT}

For $a\in(0,1)$ and fixed, we define the Banach spaces
\begin{align*}
X&=\{w \in C^{3+a}(\D,\C):\text{$w$ is holomorphic in $\D$ and $w(\overline{ \zeta})=-\overline{w(\zeta)}$}\},\\
Y&=\{f\in C^{2+a}(\partial\D,\R):f(\overline{\zeta})=f(\zeta)\}
\intertext{and the open set}
U&=\{w\in X:\text{$1-i\zeta w_\zeta(\zeta)\neq0$ for $|\zeta|=1$}\} \subset X.
\end{align*}
We define $\F\colon U\times\R^2 \to Y$ as
\[
\F(w;G,A)=\frac12\frac{(1+\Omega(A)(\Im w+\Q (w)))^2}{|1-i\zeta w_\zeta|^2}+G\Im w-B(A)
\qquad \text{for $|\zeta|=1$},
\]
where $\Omega(A)$ and $B(A)$ are in \eqref{def:omega(A)}. Since $\overline{\zeta}=1/\zeta$ for $|\zeta|=1$, note from \eqref{def:Q} that $\Q(w(\overline{\zeta}))=\Q(w(\zeta))$ for $w \in X$. Since \eqref{eqn:Q(a)} is real analytic and \eqref{def:zeta} is holomorphic, $\Q$ is real analytic. Therefore $\F$ is well-defined and real analytic. 

Recall from Section~\ref{sec3:crapper} that for any $A\in[0,1/2)$,
\begin{equation}\label{eqn:F(A)=0}
\F(w(A);0,A)=0,
\end{equation}
and our task is to construct nearby solutions of $\F(w;G,A)=0$ for $G\neq0$ small. 

\begin{theorem}\label{thm:IFT}
For each $A_0\in(0,1/2)$ there exists $\varepsilon > 0$ and a real-analytic map 
\[
W\colon (-\varepsilon,\varepsilon) \times (A_0-\varepsilon,A_0+\varepsilon) \to U
\]
such that $W(0,A)=w(A)$ and 
\[
\F(W(G,A);G,A)=0.
\]
Moreover, there exists $\delta > 0$ such that $W(G,A)$ is the unique solution of $\F(w;G,A)=0$ for which $\|w-w(A)\|_X < \delta$.
\end{theorem}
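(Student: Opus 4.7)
The plan is to apply the analytic implicit function theorem to $\F\colon U\times\R^2\to Y$ at $(w(A_0);0,A_0)$. Since $\F$ is real-analytic, as already observed in the text, and $\F(w(A);0,A)=0$ for every $A\in[0,1/2)$ by \eqref{eqn:F(A)=0}, the substantive content of the theorem is that the partial derivative
\[
L_A := D_w\F(w(A);0,A)\colon X\longrightarrow Y
\]
is a Banach space isomorphism for each $A\in(0,1/2)$. Granted this, the analytic IFT supplies a real-analytic map $W(G,A)$ defined near $(0,A_0)$ with $W(0,A_0)=w(A_0)$; because $A\mapsto w(A)$ is real-analytic into $X$ and satisfies $\F(w(A);0,A)=0$, the local uniqueness clause of the IFT forces $W(0,A)=w(A)$ for all nearby $A$, and the uniqueness assertion of the theorem is immediate.

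The second step is to put $L_A$ into a form amenable to analysis. Differentiating $\F$ and using the closed-form identities \eqref{eqn:crapper1}--\eqref{eqn:|z'(A)|} for the factors evaluated at $w=w(A)$ and $G=0$, the linearization takes the shape
\[
L_A[\dot w] = P(\zeta)\bigl(\Im\dot w + D\Q(w(A))[\dot w]\bigr) + R(\zeta)\,\Re\!\bigl[\overline{(1-i\zeta w_\zeta(A))}\,(-i\zeta\dot w_\zeta)\bigr] \qquad\text{on }|\zeta|=1,
\]
where $P$ and $R$ are explicit rational functions of $\zeta$. The first variation $D\Q(w(A))[\dot w]$ is, by \eqref{def:Q}, a singular contour integral against $(\zeta-\zeta')^{-2}$; because $w(A)$ is rational in $\zeta$ and $\dot w$ extends holomorphically to $\D$, it collapses by the calculus of residues to a meromorphic expression in $\zeta$ whose only singularity inside $\D$ is at $\zeta=-A$.

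The third and principal step, which I expect to be the main obstacle, is to invert $L_A$. Given $f\in Y$, the equation $L_A[\dot w]=f$ on $|\zeta|=1$ is an identity between boundary values of meromorphic functions; the reality symmetries built into $X$ and $Y$ (namely $\dot w(\overline\zeta)=-\overline{\dot w(\zeta)}$ and $f(\overline\zeta)=f(\zeta)$) together with $\overline\zeta=1/\zeta$ on the circle allow one to rewrite both sides as meromorphic functions of $\zeta\in\D$, so that analytic continuation promotes the identity to a first-order complex ODE for the holomorphic unknown $\dot w$, with meromorphic coefficients whose only interior poles lie at $\zeta=0$ and $\zeta=-A$. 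An integrating factor produces a one-complex-parameter family of candidate $\dot w$; demanding holomorphy at $\zeta=-A$ imposes a scalar residue condition which, combined with the antisymmetry, pins down the free constant and yields a bounded two-sided inverse $Y\to X$. The heart of the argument is this last passage: extracting a sufficiently explicit meromorphic form for $D\Q(w(A))[\dot w]$ so that the ODE structure becomes visible, accounting for all poles of the resulting ODE, verifying that the one residue obstruction is always solvable, and checking $C^{3+a}$ boundary regularity of the candidate $\dot w$ from the integrating-factor formula.
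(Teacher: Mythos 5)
Your overall framing (analytic IFT once $D_w\F(w(A);0,A)$ is shown to be an isomorphism, residue evaluation of the linearized commutator, which indeed collapses to the single interior pole at $\zeta=-A$ as in \eqref{def:Qw}) matches the paper, but the inversion step contains a genuine gap. You propose to solve the \emph{inhomogeneous} equation $L_A[\dot w]=f$ by analytically continuing the boundary identity into $\D$ and reading it as a complex ODE with meromorphic coefficients. For general $f\in Y$ this continuation is not available: $f$ is an arbitrary $C^{2+a}$ real-valued function on $\partial\D$, not the boundary trace of a meromorphic function, so "both sides" cannot be rewritten as meromorphic functions of $\zeta\in\D$. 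The paper sidesteps exactly this by splitting the proof: Lemma~\ref{lem:Fredholm} shows $\L(A)$ is Fredholm of index zero (Schauder estimate \eqref{eqn:schauder} for the principal Riemann--Hilbert part, compactness, and a homotopy in $A$ down to the explicitly diagonalizable operator $\L(0)$), and Lemma~\ref{lem:invertible} applies the ODE-with-meromorphic-coefficients argument only to the \emph{homogeneous} problem (the kernel), where $\Im f=0$ on $\partial\D$ lets one subtract the pole contribution and invoke the maximum principle to get a genuine ODE $v_\zeta+pv=q$, whose residue analysis at $\zeta=A$, $-A$ and $0$ yields a contradiction. Index zero plus trivial kernel then gives the bounded inverse for free, with no need to construct it.

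Even if you repair the continuation by first solving a Schwarz problem to replace $f$ by a holomorphic extension, your integrating-factor construction faces two further unaddressed obstacles. First, the integrating factor behaves like $\zeta^{(1-A^2)/(1+A^2)}$ near $\zeta=0$ (the residue of $p$ at $0$ lies in $(0,1)$), so it is multivalued in $\D$; in fact the paper's remark notes that all nontrivial homogeneous solutions are multivalued, so the "one-complex-parameter family of candidate $\dot w$" does not exist among single-valued functions, and the bookkeeping of free constants versus conditions (single-valuedness/holomorphy at $0$, holomorphy at $-A$, the residue parameter corresponding to $f_{-1}$, the additive constant in the Schwarz extension) has to be carried out and shown to balance — which is essentially the index information you dropped by skipping the Fredholm step. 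Second, a two-sided inverse requires the quantitative bound $\|\dot w\|_{X}\le C\|f\|_{Y}$ (i.e.\ $C^{3+a}$ control from $C^{2+a}$ data); the paper gets this from elliptic theory, whereas your explicit formula would require a separate argument near $\partial\D$ and near the boundary pole at $\zeta=-1/A$ of the coefficients. As written, the proposal therefore does not close the invertibility argument that is the heart of Theorem~\ref{thm:IFT}.
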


Our proof of Theorem~\ref{thm:IFT} is based on the implicit function theorem. For $w\in U$, $G\in\R$ and $A\in[0,1/2)$, we calculate that the linearized operator $\F_w(w;G,A)\colon X\to Y$ is given as
\begin{align*}
\F_w(w;G,A)v=&\frac{1+\Omega(A)(\Im w+\Q (w))}{|1-i\zeta w_\zeta|^2}\Omega(A)(\Im v+\Q_w(w)v)\\
&-\frac{(1+\Omega(A)(\Im w+\Q(w)))^2}{|1-i\zeta w_\zeta|^4}\Im(\overline{(1-i\zeta w_\zeta)}\zeta v_\zeta)+G\Im v\qquad\text{for $|\zeta|=1$}.
\end{align*}
Recalling \eqref{eqn:crapper1}, \eqref{eqn:crapper2} and \eqref{eqn:|z'(A)|}, after some algebra we arrive at
\begin{align*}
\F_w(w(A);0,A)v=&(1-2\Omega(A))\frac{(\zeta+A)(\zeta+1/A)}{(\zeta-A)(\zeta-1/A)}\Omega(A)(\Im v+\Q_w(w(A))v)\\
&-\left((1-2\Omega(A))\frac{(\zeta+A)(\zeta+1/A)}{(\zeta-A)(\zeta-1/A)}\right)^2
\Im\bigg(\left(\frac{\zeta-A}{\zeta+A}\right)^2\zeta v_\zeta\bigg)
\end{align*}
for $|\zeta|=1$. On the other hand, linearizing \eqref{def:Q} about $w(A)$,
\begin{align*}
\Q_w(w(A))v(\zeta)&=-\frac{\zeta}{\pi i}\ointctrclockwise_{|\zeta'|=1}
\frac{\Im(w(\zeta;A)-w(\zeta';A))\Im(v(\zeta)-v(\zeta'))}{(\zeta-\zeta')^2}~d\zeta'\\
&=\Im\left(-\frac{\zeta}{\pi i}\ointctrclockwise_{|\zeta'|=1}
\frac{\Im(w(\zeta;A)-w(\zeta';A))}{(\zeta-\zeta')^2}(v(\zeta)-v(\zeta'))~d\zeta'\right)
\end{align*}
for $|\zeta|=1$. Recalling \eqref{eqn:Im(w(A))}, since
\[
\frac{\Im(w(\zeta;A)-w(\zeta';A))}{(\zeta-\zeta')^2}=
\frac{2}{\zeta-\zeta'}\left(\frac{A}{(\zeta+A)(\zeta'+A)}+\frac{1/A}{(\zeta+1/A)(\zeta'+1/A)}\right)
\]
has poles at $\zeta'=\zeta$ and $-A$ in $\D$, we evaluate the integral by means of the calculus of residues to arrive at a strikingly simple, albeit non-local, formula
\begin{equation}\label{def:Qw}
\Q_w(w(A))v(\zeta)=\Im\left(-4A\zeta\frac{v(\zeta)-v(-A)}{(\zeta+A)^2}\right)
\qquad\text{for $|\zeta|=1$}.
\end{equation}
Therefore
\begin{equation}\label{def:F'}
\F_w(w(A);0,A)v=-(1-2\Omega(A))\frac{(\zeta+A)(\zeta+1/A)}{(\zeta-A)(\zeta-1/A)}\L(A)v \qquad\text{for $|\zeta|=1$},
\end{equation}
where
\begin{equation}\label{def:L}
\L(A)v:=\Im\left((1-2\Omega(A))\zeta\frac{(\zeta-A)(\zeta+1/A)}{(\zeta+A)(\zeta-1/A)}v_\zeta
-\Omega(A)v+4A\Omega(A)\zeta\frac{v-v(-A)}{(\zeta+A)^2}\right).
\end{equation}
We pause to remark that $\L(A)$ is a `generalized' Riemann--Hilbert operator, with the difference that in addition to $v$ itself, $v_\zeta$ appears as well as the non-local term $v(-A)$. General classes of Riemann--Hilbert operators including those of the form \eqref{def:L} with H\"older continuous coefficients are discussed in \cite[Section~34]{gakhov} and \cite[Section~71]{muskhelishvili}, among many others, and $\L(A)v=f$ can be transformed into various integral equations. We emphasize that the particularly simple form of \eqref{def:L} comes from the use of the commutator formula \eqref{def:Q}, introduced by \cite{BDT1,BDT2} for zero vorticity.

\begin{lemma}\label{lem:Fredholm}
For any $A\in[0,1/2)$, $\L(A) \colon X \to Y$ is Fredholm with index zero.
\end{lemma}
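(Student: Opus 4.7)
The plan is to split
\[
\L(A) = \L_{p}(A) + \L_{c}(A),
\]
where
$\L_{p}(A)v := \Im\left((1-2\Omega(A))\,\zeta\,\dfrac{(\zeta-A)(\zeta+1/A)}{(\zeta+A)(\zeta-1/A)}\,v_\zeta\right)$
is the principal (first-order differential) part and $\L_{c}(A)$ collects the two lower-order pieces $-\Im(\Omega(A)v)$ and $\Im(4A\Omega(A)\zeta(v-v(-A))/(\zeta+A)^2)$. The idea is: show that $\L_{c}(A)$ is compact and that $\L_{p}(A)$ is Fredholm, so $\L(A)$ is Fredholm; then pin down the index by homotopy in $A$.

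For the compactness of $\L_{c}(A)\colon X\to Y$, observe that for each $A\in[0,1/2)$ its coefficients are smooth on $\overline{\D}$ (since $\pm A\in\D$) and the non-local functional $v\mapsto v(-A)$ is bounded on $X$; so $\L_{c}(A)$ maps $X=C^{3+a}$ boundedly into $C^{3+a}(\partial\D,\R)$, and then the Arzel\`a--Ascoli embedding $C^{3+a}(\partial\D)\hookrightarrow C^{2+a}(\partial\D)=Y$ gives compactness. For the principal part, set $c(\zeta):=(1-2\Omega(A))(\zeta-A)(\zeta+1/A)/[(\zeta+A)(\zeta-1/A)]$. A short computation using that $(1+A\zeta)/(\zeta+A)$ and $(\zeta-A)/(1-A\zeta)$ have modulus one on $\partial\D$ shows $|c|=2\Omega(A)-1>0$ on $\partial\D$ (recall $\Omega(A)>1$ on $[0,1/2)$). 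Since $c$ has exactly one simple zero at $A\in\D$ and one simple pole at $-A\in\D$, its winding number around $0$ along $\partial\D$ vanishes; this yields an explicit Birkhoff-type factorization $c=c_{+}c_{-}$ with $c_{+}(\zeta)=(1+A\zeta)/(1-A\zeta)$ holomorphic and nonvanishing on $\overline\D$ and $c_{-}(\zeta)$ holomorphic and nonvanishing on $\{|\zeta|\ge1\}\cup\{\infty\}$. I would then factor $\L_{p}(A)=S_{A}\circ T$, where $T\colon v\mapsto\zeta v_\zeta$ is Fredholm of index zero from $X$ into the analogous $C^{2+a}$ holomorphic space with symmetry (its kernel and cokernel both consist of imaginary constants) and $S_{A}\colon u\mapsto\Im(cu)$ is a classical Riemann--Hilbert operator with smooth nonvanishing coefficient, hence Fredholm by the standard theory \cite{gakhov,muskhelishvili}. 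Thus $\L(A)=\L_{p}(A)+\L_{c}(A)$ is Fredholm.

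The index will follow from a homotopy argument: $A\mapsto\L(A)$ is real analytic into the bounded operators $X\to Y$ on the connected interval $[0,1/2)$, and each $\L(A)$ is Fredholm, so the Fredholm index is constant in $A$. At $A=0$ the non-local and M\"obius pieces of $\L$ collapse and a short computation gives $\L(0)v=\Im(\zeta v_\zeta - v)$. Expanding $v=\sum_{n\ge 0}ir_n\zeta^n$ with $r_n\in\R$ and $f=\sum_{n\ge 0}c_n\cos n\theta$, the equation $\L(0)v=f$ reduces to $(n-1)r_n=c_n$: the kernel is spanned by $i\zeta$, the cokernel by $\cos\theta$, so the index is $0$. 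Hence $\L(A)$ is Fredholm of index zero for every $A\in[0,1/2)$.

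The hardest step will be establishing Fredholmness of the non-standard Riemann--Hilbert operator $S_{A}$, whose coefficient $c$ possesses both a zero and a pole inside $\D$ (conspiring to give winding number zero) while $X$ and $Y$ also carry the conjugation symmetry. The explicit factorization $c=c_{+}c_{-}$ is what makes this tractable, and the homotopy to $A=0$ avoids any need to compute the index of $S_{A}$ directly at general $A$.
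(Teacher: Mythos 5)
Your proposal is correct in substance and shares the paper's skeleton — the same principal part, the continuity of the Fredholm index in $A$ on $[0,1/2)$, and the same explicit computation at $A=0$ (kernel spanned by $i\zeta$, cokernel by $\Re\zeta$) — but the Fredholmness step itself is genuinely different. The paper never splits $\L(A)$ into Fredholm-plus-compact pieces: it proves only semi-Fredholmness, via the Schauder-type a priori estimate $\|v\|_X\le C(\|\L(A)v\|_Y+\|v\|_{C^0(\D)})$ for the Riemann--Hilbert operator acting on $v_\zeta$ (classical elliptic theory) combined with the compact embedding of $X$ into $C^0(\overline\D)$, and only the homotopy to $A=0$ upgrades this to index zero. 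You instead treat the zeroth-order and non-local terms as a compact operator $X\to Y$ (correct, since their boundary values are controlled in $C^{3+a}(\partial\D)$, which embeds compactly into $Y$; note the coefficients are smooth on $\partial\D$ rather than on $\overline\D$, the pole of $\zeta/(\zeta+A)^2$ at $-A$ lying strictly inside), and you factor the principal part as $S_A\circ T$ with $T v=\zeta v_\zeta$ of index zero and $S_A u=\Im(cu)$ a classical Riemann--Hilbert operator with smooth nonvanishing coefficient of winding number zero, so that full Fredholmness of each $\L(A)$ comes from the Gakhov--Muskhelishvili theory with no elliptic estimates; your explicit factorization $c=c_+c_-$ is consistent (one checks $c=(2\Omega(A)-1)\frac{(1+A\zeta)(\zeta-A)}{(1-A\zeta)(\zeta+A)}$) though not strictly needed once Fredholmness of $S_A$ is quoted. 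Two details deserve a line in a written-up version: the classical theory is stated on the full spaces of holomorphic H\"older functions, so you should observe that $S_A$ commutes with the conjugation symmetry and is block-diagonal with respect to the symmetric/antisymmetric splitting, whence Fredholmness restricts to $S_A$ on the symmetric subspace; and the index-zero claim for $T$ uses the regularity gain that $u\in C^{2+a}(\overline\D)$ holomorphic with $u(0)=0$ yields $\int_0^\zeta u(t)t^{-1}\,dt\in C^{3+a}(\overline\D)$. Finally, $\Omega(0)=1$ rather than $\Omega(A)>1$ on all of $[0,1/2)$, but $|1-2\Omega(A)|=2\Omega(A)-1\ge 1$ still holds, which is all you use.
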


\begin{proof}
The principal part of $\L(A)$ is 
\begin{equation}\label{def:L0}
\L_0(A)v:=\Im\left((1-2\Omega(A))\zeta\frac{(\zeta-A)(\zeta+1/A)}{(\zeta+A)(\zeta-1/A)} v_\zeta\right),
\end{equation}
which is a classical Riemann--Hilbert operator acting on $v_\zeta$. Since 
\begin{equation*}
\inf_{\zeta\in\partial\D}\left|(1-2\Omega(A))\zeta\frac{(\zeta-A)(\zeta+1/A)}{(\zeta+A)(\zeta-1/A)}\right|>0
\end{equation*}
whenever $A\in[0,1/2)$, classical elliptic theory (see, for instance, \cite[Chapter~1, Section~2]{volpert:book}) implies that 
\[
\|v_\zeta\|_{C^{2+a}(\D)}\le C(\|\L_0(A)v\|_{C^{2+a}(\partial\D)}+\|v\|_{C^0(\D)})
\]
for some constant $C$ independent of $v$ and, hence, by interpolation,  
\begin{equation}\label{eqn:schauder}
\|v\|_X\le C(A)(\|\L(A)v\|_Y+\|v\|_{C^0(\D)}).
\end{equation}
(Alternatively, one can obtain \eqref{eqn:schauder} by reformulating \eqref{def:L} as an elliptic system for the real and imaginary parts of $v$ and verifying that it satisfies the hypotheses of \cite{ADN}.)
Since $X$ is compact in $C^0(\overline\D)$, we conclude that $\L(A)\colon  X\to Y$ is semi-Fredholm, with closed range and a finite-dimensional kernel. It remains to show that the index of $\L(A)$ is zero.
  
Since the index of $\L(A)$ is continuous and, hence, independent of $A\in[0,1/2)$, it suffices to show that the index of $\L(0)$ is zero. We write $v\in X$ as 
\[
v(\zeta) = \sum_{n=0}^\infty v_n \zeta^n,
\]
where the coefficients $v_n$ are purely imaginary by symmetry, so that
\[
\L(0)v=\Im(\zeta v_\zeta-v)=\Im\left(\sum_{n=0}^\infty(n-1)v_n \zeta^n\right)
\]
by \eqref{def:L} and \eqref{def:omega(A)}. A straightforward calculation reveals that the kernel of $\L(0)$ is one dimensional, spanned by $i\zeta$, and the co-kernel is one dimensional, spanned by $\Re\zeta$. Therefore $\L(0):X\to Y$ is Fredholm with index zero, and the proof is complete.
\end{proof}

Therefore $\L(A)\colon X\to Y$ is invertible provided that the kernel is trivial. We will study the kernel of $\L(A)$ by relating it to a complex ODE, for which the following is useful.

\begin{lemma}\label{lem:elementary}
  Suppose that $v$ is holomorphic in a neighborhood of $\zeta_0\in\C$ and that
    \begin{equation}\label{eqn:v'+pv=q}
      v_\zeta+pv=q,
    \end{equation}
    where $p$ and $q$ are meromorphic with at most simple poles at $\zeta_0$.
  \begin{enumerate}[label=\rm(\roman*)]
  \item\label{lem:elementary:basic} $\operatorname{Res}(q,\zeta_0) = v(\zeta_0) \operatorname{Res}(p,\zeta_0)$. 
  \item\label{lem:elementary:qzero}
    If $q \equiv 0$ and $v \not\equiv 0$ then $\operatorname{Res}(p,\zeta_0) \le 0$ is an integer.
  \item\label{lem:elementary:q1}
    If $\operatorname{Res}(p,\zeta_0)=-2$, so that 
    \begin{align}
      \label{eqn:p0p1q0q1}
      p(\zeta) = \frac{-2}{\zeta-\zeta_0} + p_{0} + p_1(\zeta-\zeta_0) + \cdots
      \quad\text{and}\quad 
      q(\zeta) = \frac{-2v(\zeta_0)}{\zeta-\zeta_0} + q_{0} + q_1(\zeta-\zeta_0) + \cdots
    \end{align}
    as $\zeta\to \zeta_0$, by \ref{lem:elementary:basic}, for some $p_0,p_1,q_0,q_1 \in \C$, then
    \begin{align}
      \label{eqn:q1}
      (p_0^2+p_1)v(\zeta_0) - p_0 q_0 - q_1 = 0.
    \end{align}
  \end{enumerate}
\end{lemma}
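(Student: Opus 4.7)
The common strategy for all three parts is to substitute Laurent expansions at $\zeta_0$ into \eqref{eqn:v'+pv=q} and match coefficients; after translating we may take $\zeta_0=0$. Part \ref{lem:elementary:basic} falls out immediately: since $v$ is holomorphic at $\zeta_0$, so is $v_\zeta$, contributing no residue; writing $p=\operatorname{Res}(p,\zeta_0)/(\zeta-\zeta_0)+(\text{holomorphic})$ gives $\operatorname{Res}(pv,\zeta_0)=\operatorname{Res}(p,\zeta_0)\,v(\zeta_0)$. Taking residues of both sides of \eqref{eqn:v'+pv=q} yields the claim.

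For part \ref{lem:elementary:qzero} I would split on whether $v(\zeta_0)$ vanishes. If $v(\zeta_0)\neq 0$, then part \ref{lem:elementary:basic} with $q\equiv 0$ forces $\operatorname{Res}(p,\zeta_0)=0$, which is a non-positive integer. Otherwise, factor $v(\zeta)=(\zeta-\zeta_0)^k \tilde v(\zeta)$ with $k\ge 1$ and $\tilde v(\zeta_0)\neq 0$; substituting into $v_\zeta+pv=0$ and dividing through by $(\zeta-\zeta_0)^{k-1}$ gives
\[
k\tilde v+(\zeta-\zeta_0)\tilde v_\zeta+(\zeta-\zeta_0)p\,\tilde v=0.
\]
Evaluating at $\zeta_0$ produces $(k+\operatorname{Res}(p,\zeta_0))\tilde v(\zeta_0)=0$, whence $\operatorname{Res}(p,\zeta_0)=-k$, a negative integer.

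Part \ref{lem:elementary:q1} is a direct Taylor computation. I would write $v(\zeta)=v(\zeta_0)+v_1\zeta+v_2\zeta^2+\cdots$ (with $\zeta_0=0$) and multiply out using the expansions \eqref{eqn:p0p1q0q1}. Through order $\zeta^1$ this yields
\[
v_\zeta+pv=\frac{-2v(\zeta_0)}{\zeta}+\bigl(p_0 v(\zeta_0)-v_1\bigr)+\bigl(p_1 v(\zeta_0)+p_0 v_1\bigr)\zeta+\cdots.
\]
Matching the constant term against $q_0$ gives $v_1=p_0 v(\zeta_0)-q_0$; substituting this into the matching of the $\zeta^1$ coefficient, $p_0 v_1+p_1 v(\zeta_0)=q_1$, yields precisely \eqref{eqn:q1}. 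None of the three parts poses a genuine obstacle — the lemma is essentially careful bookkeeping in a Laurent series — and the only thing to watch is retaining enough terms (through linear order) in part \ref{lem:elementary:q1} and tracking the sign from the residue relation.
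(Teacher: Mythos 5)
Your proposal is correct and follows essentially the same route as the paper: take residues (equivalently, match Laurent coefficients) in $v_\zeta+pv=q$ for \ref{lem:elementary:basic} and \ref{lem:elementary:q1}, with the same cancellation of the $v_{\zeta\zeta}(\zeta_0)$ terms and elimination of $v_\zeta(\zeta_0)$ in part \ref{lem:elementary:q1}. For part \ref{lem:elementary:qzero} you factor out the zero of $v$ directly rather than writing $p=-\frac{d}{d\zeta}\log v$ as the paper does, but this is the same order-of-vanishing computation in different clothing.
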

\begin{proof}
  The assertion \ref{lem:elementary:basic} follows from the well-known fact that $\operatorname{Res}(fg,\zeta_0) = f(\zeta_0) \operatorname{Res}(g,\zeta_0)$ whenever $f$ is analytic and $g$ has at most a simple pole at $\zeta_0$. If $q \equiv 0$ and $v \not \equiv 0$ then \eqref{eqn:v'+pv=q} rearranges to
  \begin{align*}
    p = -\frac{v_\zeta}v = -\frac{d}{d\zeta} \log v.
  \end{align*}
  A simple calculation shows that $\operatorname{Res}(p,\zeta_0)=-m$, where
  $m \ge 0$ is the order of the zero of $v$ at $\zeta_0$.
  %
  Finally, to see \ref{lem:elementary:q1}, we insert \eqref{eqn:p0p1q0q1} into \eqref{eqn:v'+pv=q} to obtain
  \begin{align*}
    \frac{-2v(\zeta_0)}{\zeta-\zeta_0}
    + (p_0 v(\zeta_0) - v_\zeta(\zeta_0))
    + (p_1 v(\zeta_0) + p_0 v_\zeta(\zeta_0))(\zeta-\zeta_0) + \cdots
      = \frac{-2v(\zeta_0)}{\zeta-\zeta_0} 
      + q_{0} + q_1(\zeta-\zeta_0) +\cdots
  \end{align*}
  as $\zeta\to\zeta_0$, where the terms involving $v_{\zeta\zeta}(\zeta_0)$ cancel because   $\operatorname{Res}(p,\zeta_0)=-2$. Grouping like powers and eliminating $v_\zeta(\zeta_0)$, we obtain \eqref{eqn:q1} as desired.
\end{proof}

\begin{lemma}\label{lem:invertible}
For any $A \in (0,1/2)$, $\L(A) \colon X \to Y$ is invertible.
\end{lemma}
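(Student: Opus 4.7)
The plan is to show that $\ker\L(A) = \{0\}$; the invertibility then follows from Lemma \ref{lem:Fredholm}. Suppose $v \in X$ satisfies $\L(A)v = 0$, so that
\[
F(\zeta) := (1-2\Omega)\zeta\frac{(\zeta-A)(\zeta+1/A)}{(\zeta+A)(\zeta-1/A)} v_\zeta - \Omega v + 4A\Omega\zeta\frac{v - v(-A)}{(\zeta+A)^2},
\]
with $\Omega = \Omega(A)$, is real on $\partial\D$. Since $F$ is meromorphic in $\D$ with at most a simple pole at $\zeta = -A$, and the symmetry $v(\overline\zeta) = -\overline{v(\zeta)}$ forces $F(\overline\zeta) = -\overline{F(\zeta)}$, Schwarz reflection across $\partial\D$ extends $F$ to a rational function on $\C \cup \{\infty\}$ with simple poles only at $-A$ and $-1/A$, bounded at $0$ and $\infty$. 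Matching these constraints pins down
\[
F(\zeta) = \frac{i\lambda}{A}\cdot\frac{1-\zeta^2}{(\zeta+A)(\zeta+1/A)}
\]
for a single real parameter $\lambda$.

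The defining equation for $F$ rearranges to a first-order linear ODE $v_\zeta + pv = q$ with $p, q$ meromorphic. From the three terms defining $F$ one directly reads off $F(A) = -\Omega v(-A)$, so the apparent singularity of $p, q$ at $\zeta = A$ is removable, and $p, q$ have poles in $\overline\D$ only at $\zeta = 0$ and $\zeta = -A$, with
\[
\operatorname{Res}(p,0) = \frac{1-A^2}{1+A^2} \in (0,1), \qquad \operatorname{Res}(p,-A) = -2.
\]
Writing $v(-A) = i\mu$ with $\mu \in \R$, the identity $F(A) = -\Omega v(-A)$ combined with the explicit form of $F$ gives $\lambda = -2A\mu(1+A^2)/(1-3A^2)$, and Lemma \ref{lem:elementary}\ref{lem:elementary:basic} at $\zeta = 0$ then determines $v(0)$ in terms of $\mu$.

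The crucial step is applying Lemma \ref{lem:elementary}\ref{lem:elementary:q1} at $\zeta = -A$, which requires expanding $p$ and $q$ to order $s = \zeta + A$. The main intermediate ingredient is $R'(-A)/R(-A) = -3/(2A) + 2A/(1-A^4)$, where $R(\zeta) := (\zeta+A)P(\zeta)$ with $P$ the coefficient of $v_\zeta$ in the definition of $F$. Collecting all contributions, the solvability condition reduces to
\[
\mu\Omega + \lambda\bigg(\frac{R'(-A)}{R(-A)} + \frac{2}{A} + \frac{A}{1-A^2}\bigg) = 0.
\]
Substituting the expression for $\lambda$ and using the identity $(1-A^2)^2 - (A^4 + 6A^2 + 1) = -8A^2$, this collapses cleanly to
\[
\frac{-8A^2}{(1-3A^2)(1-A^2)}\mu = 0,
\]
whose coefficient is nonzero for $A \in (0, 1/2)$, forcing $\mu = 0$. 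Then $\lambda = 0$, $F \equiv 0$, $q \equiv 0$, and the ODE becomes homogeneous $v_\zeta + pv = 0$; since $\operatorname{Res}(p,0) \notin \mathbb{Z}_{\le 0}$, Lemma \ref{lem:elementary}\ref{lem:elementary:qzero} forces $v \equiv 0$. The main obstacle is the bookkeeping of the subleading Laurent expansion at $\zeta = -A$; the striking cancellation to $-8A^2\mu$ is the payoff.
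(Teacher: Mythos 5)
Your proposal is correct and follows essentially the same route as the paper: reduce invertibility to triviality of the kernel via Lemma~\ref{lem:Fredholm}, pin down the boundary-real meromorphic combination as the explicit one-real-parameter family (your Schwarz-reflection argument is equivalent to the paper's subtract-the-pole-and-apply-the-maximum-principle step, and your $F$ agrees with the paper's $\tilde f=0$ identity), then analyze the resulting ODE $v_\zeta+pv=q$ via Lemma~\ref{lem:elementary} at $\zeta=A$, $\zeta=-A$ (where $\operatorname{Res}(p,-A)=-2$) and $\zeta=0$ (where $\operatorname{Res}(p,0)=\tfrac{1-A^2}{1+A^2}\in(0,1)$). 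Your stated formulas check out, including $F(A)=-\Omega v(-A)$, $R'(-A)/R(-A)=-\tfrac{3}{2A}+\tfrac{2A}{1-A^4}$, and the collapse of the solvability condition to a nonzero multiple of $\mu$, which is the paper's $4v(-A)/(1-A^2)^2=0$ up to a nonzero factor coming from a different normalization.
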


\begin{proof}
  By Lemma~\ref{lem:Fredholm}, it suffices to show that the kernel of $\L(A)$ is trivial. Suppose for the sake of contradiction that 
  $v \not \equiv 0, \in X$ lies in the kernel of $\L(A)$. That is, 
  \[
  \Im\left((1-2\Omega(A))\zeta\frac{(\zeta-A)(\zeta+1/A)}{(\zeta+A)(\zeta-1/A)} v_\zeta-\Omega(A)v+4A\Omega(A)\zeta\frac{v-v(-A)}{(\zeta+A)^2}\right)=0\qquad \text{for $|\zeta|=1$}.
  \]
  Clearly,
  \begin{equation}\label{def:f}
    f:=(1-2\Omega(A))\zeta\frac{(\zeta-A)(\zeta+1/A)}{(\zeta+A)(\zeta-1/A)} v_\zeta-\Omega(A)v+4A\Omega(A)\zeta\frac{v-v(-A)}{(\zeta+A)^2}
  \end{equation}
  is meromorphic in $\D$, possibly with a simple pole at $\zeta=-A$, and $\Im f=0$ on $\partial\D$. 
 
  Since $v(\overline\zeta)=-\overline{v(\zeta)}$ by symmetry, $v$ is purely imaginary on the real axis and, hence, the residue of $f$ at $\zeta=-A$ must be purely imaginary. Therefore
  \[
    f-\frac{if_{-1}}{\zeta+A}
  \]
  is holomorphic in $\D$ for some $f_{-1}\in\R$. Note that 
  \[
     \frac{if_{-1}}{\zeta+A} +\overline{\left(\frac{if_{-1}}{\zeta+A}\right)}
    = \frac{if_{-1}}{\zeta+A}+\frac{if_{-1}/A^2}{\zeta+1/A}-\frac{if_{-1}}A \qquad\text{for $|\zeta|=1$}
  \]
  is real, and we consider the function
  \begin{equation}\label{def:tilde f}
    \tilde f := f-\frac{if_{-1}}{\zeta+A}-\frac{if_{-1}/A^2}{\zeta+1/A}+\frac{if_{-1}}A,
  \end{equation}
  which is not only holomorphic in $\D$ but also real on $\partial\D$. Since $v$ is purely imaginary on $\R \cap \D$, the same is true of $\tilde f$. Together, these lead to  
  \begin{equation}\label{eqn:f=0}
    \tilde f =0
    \qquad \text{in } \D.
  \end{equation}
  Indeed, since $\Im\tilde f = 0$ on $\partial\D$, the maximum principle implies that $\Im \tilde f = 0$ in $\D$. Thus $\tilde f$ is a real and holomorphic function and, hence, a real constant. Finally, since $\tilde f$ is purely imaginary along $\R$, the only possibility is that this constant is $0$.

  Recalling \eqref{def:f} and \eqref{def:tilde f} we write \eqref{eqn:f=0} as the complex ODE in \eqref{eqn:v'+pv=q}, where
  \begin{align*}
    p(\zeta)&=-\frac{\Omega(A)}{1-2\Omega(A)} \frac{(\zeta-A)(\zeta-1/A)}{\zeta(\zeta+A)(\zeta+1/A)}
    = \frac{1-A^2}{1+A^2} \frac 1\zeta - \frac 2{\zeta+A} + \frac 2{\zeta+1/A}
    \intertext{and}
    q(\zeta)&=\frac1{1-2\Omega(A)}\frac{(\zeta+A)(\zeta-1/A)}{\zeta(\zeta-A)(\zeta+1/A)}
    \left(\frac{4A\Omega(A)v(-A)\zeta}{(\zeta+A)^2}
    +\frac{if_{-1}}{\zeta+A}+\frac{if_{-1}/A^2}{\zeta+1/A}-\frac {if_{-1}}A\right)\\
    &= 
    \frac{i(1-3A^2)}{A(1+A^2)} f_{-1} 
    \left( \frac 1\zeta 
    - \frac{(1-A^2)^2}{(1+A^2)^2} \frac 1{\zeta-A}
    - \frac{1+6A^2+A^4}{(1+A^2)^2} \frac 1{\zeta+1/A}
    + \frac{2(1-A^2)}{A(1+A^2)} \frac 1{(\zeta+1/A)^2}
    \right)\\
    &\qquad +
    v(-A) \left(
    \frac{(1-A^2)^2}{(1+A^2)^2} \frac 2{\zeta-A}
    - \frac 2{\zeta+A} 
    + \frac{8A^2}{(1+A^2)^2} \frac 1{\zeta+1/A}\right),
  \end{align*}
  where we use \eqref{def:omega(A)} and partial fractions. Note that potential singularities in $\D$ of $p$ and $q$ are simple poles at $\zeta=\pm A$ and $\zeta=0$.  
  
  At $\zeta=A$ we calculate that
  \[
    \operatorname{Res}(p,A)=0
    \quad\text{and}\quad
    \operatorname{Res}(q,A)=
    2 \frac{(1-A^2)^2}{(1+A^2)^2} v(-A)
    - \frac{i(1-A^2)^2(1-3A^2)}{A(1+A^2)^3} f_{-1}.
  \]
  Applying Lemma~\ref{lem:elementary}\ref{lem:elementary:basic}, we deduce that
  \begin{equation}\label{eqn:f-1}
    f_{-1} = -\frac{2iA(1+A^2)}{1-3A^3} v(-A).
  \end{equation}
  Using this to eliminate $f_{-1}$ in the above formula for $q$, things simplify considerably and we are left with
  \begin{align*}
    q = 2v(-A) \left( \frac 1\zeta - \frac 1{\zeta+A} - \frac 1{\zeta+1/A} 
    + \frac{2A(1-A^2)}{1+A^2} \frac 1{(\zeta+1/A)^2} \right).
  \end{align*}
  In particular, at $\zeta=-A$, we calculate
  \[
    \operatorname{Res}(p,-A)=-2
    \quad\text{and}\quad
    \operatorname{Res}(q,-A)=-2v(-A).
  \]
  Writing 
  \begin{align*}
    p(\zeta) = \frac{-2}{\zeta+A} + p_{0} + p_1(\zeta+A) + \cdots \quad\text{and}\quad 
    q(\zeta) = \frac{-2v(-A)}{\zeta+A} + q_{0} + q_1(\zeta+A) + \cdots
  \end{align*}
  as $\zeta\to -A$, 
  where
  \begin{align*}
    p_0 &= -\frac{1-4A^2-A^4}{A(1-A^2)(1+A^2)},
    &
    p_1 &= -\frac{1-3A^2+5A^4+A^6}{A^2(1-A^2)^2(1+A^2)},
    \\
    q_0 &= -\frac{2v(-A)}{A(1+A^2)},
    &
    q_1 &= -\frac{2(1-A^2+2A^4)v(-A)}{A^2(1-A^2)^2(1+A^2)},
  \end{align*}
  we conclude from Lemma~\ref{lem:elementary}\ref{lem:elementary:q1} that
  \begin{align*}
    (p_0^2+p_1)v(-A) - p_0 q_0 - q_1 
    =
    \frac {4v(-A)}{(1-A^2)^2} = 0.
  \end{align*}
  This forces $v(-A)=0$ and, hence, $f_{-1}=0$ as well by \eqref{eqn:f-1}. Thus $q \equiv 0$ and, therefore, Lemma~\ref{lem:elementary}\ref{lem:elementary:qzero} implies that $\operatorname{Res}(p,0) \le 0$ is an integer. But we calculate
  \[
    \operatorname{Res}(p,0)=\frac{1-A^2}{1+A^2} \in (0,1),
  \]
  which is the desired contradiction.
\end{proof}

\begin{remark*}
Recall $v(-A)=0$ and $f_{-1}=0$ in the course of the proof of Lemma~\ref{lem:invertible}, whence \eqref{eqn:f=0} becomes
\[
  v_\zeta + \frac{1-A^2}{1+A^2} \frac{(\zeta-A)(\zeta-1/A)}{\zeta(\zeta+A)(\zeta+1/A)}v=0,
\]
whose general solution is
\[
  v(\zeta)=C\left(\frac{\zeta+A}{\zeta+1/A}\right)^2\zeta^{\frac{A^2-1}{A^2+1}},
\]
where $C \in \C$ is an arbitrary constant. This is multi-valued in $\D$ unless $A=0$ or $C=0$.
\end{remark*}

\begin{remark*}
An earlier version of this paper contained an error in Lemma~\ref{lem:elementary}, which led to an overly simplistic analysis of the complex ODE in the proof of Lemma~\ref{lem:invertible}. Thankfully, this error was identified by an anonymous referee, who also suggested the current statement and proof of Lemma~\ref{lem:elementary}\ref{lem:elementary:basic}--\ref{lem:elementary:qzero}.
\end{remark*}

For each $A\in(0,1/2)$ and fixed, recall \eqref{eqn:F(A)=0}. We deduce from \eqref{def:F'} and Lemma~\ref{lem:invertible} that 
\[
\text{$\F_w(w(A);0,A)\colon X\to Y$ is invertible.}
\]
Theorem~\ref{thm:IFT} then follows at once from the implicit function theorem for real-analytic operators. 

\begin{proof}[Proof of Theorem~\ref{thm:overhang}]
For $A \in (\sqrt 2 - 1, A_{\max})$ and fixed, Theorem~\ref{thm:IFT} gives a one-parameter family of solutions  $w=W(G,A)$ to \eqref{eqn:w}, where $\Omega=\Omega(A)$ and $B=B(A)$, for $G$ sufficiently small, depending real analytically on $G$. 
Correspondingly, \eqref{def:w} gives a one-parameter family of holomorphic functions $z=Z(G,A)$ whose imaginary part solves \eqref{eqn:bernoulli}. Since $W(G,A) \in U$ by construction, \eqref{eqn:stagnation} holds true, whereby they give rise to physical solutions of \eqref{eqn:stream0} provided \eqref{eqn:injective} holds true.

When $G=0$, recall from Section~\ref{sec2:crapper} that \eqref{eqn:injective} holds true. For $G$ sufficiently small, we consider 
\[
f(\alpha,\alpha') = \frac{z(\alpha)-z(\alpha')}{\alpha-\alpha'},
\qquad
f \colon \T^2 \to \C.
\]
Since $w \in U$, this is well-defined and continuous and, moreover, \eqref{eqn:injective} holds true if and only if $f \ne 0$. Since $\T^2$ is compact, this condition is preserved under small $C^1$ perturbations of $z$ and, hence, small $C^1$ perturbations of $w$.

It remains to show that the solutions indeed give overhanging waves. When $G=0$, we can find $x^* \in \T$ and $\alpha,\alpha' \in \T$ such that $x(\alpha)=x(\alpha')=x^*$, $y_\alpha(\alpha),y_\alpha(\alpha') > 0$, and $x_\alpha(\alpha)<0<x_\alpha(\alpha')$. We then deduce from the implicit function theorem that these conditions continue to hold at some  $\alpha(G),\alpha'(G)\in\T$ for $G$ sufficiently small. This completes the proof.
\end{proof}

\begin{proof}[Proof of Theorem~\ref{thm:touching}]
When $G=0$, the profile of \eqref{def:z(A)} does not intersect itself for $A<A_{\max}$, intersects itself tangentially at one point for $A=A_{\max}$, and intersects itself transversally at two points for $A>A_{\max}$. By continuity, the profile corresponding to the solution $W(G, A_{\max}-\varepsilon/2)$ to \eqref{eqn:w} does not intersect itself for $G$ sufficiently small, while the profile corresponding to $W(G,A_{\max}+\varepsilon/2)$ intersects itself transversally at two points.
We can then find some $A\approx A_{\max}$ such that the profile corresponding to $w(G,A)$ intersects itself tangentially at one point for $G$ sufficiently small. Details are found, for instance, in the proof of \cite[Proposition~10]{CEG:touching}. We remark that this is the only place in this work where the $w \in C^{3+a}(\overline\D)$ is needed. Elsewhere it is sufficient to work with $w \in C^{1+a}(\overline\D)$.

\end{proof}

\section{Extensions}\label{sec:extensions}

There are several directions in which one might take matters further.

\paragraph{Finite depth.} 
In the finite depth setting, we replace \eqref{eqn:infty bc} by
\[
\psi=\text{const.}\qquad\text{on $y=-h$}
\]
for some $h>0$. We can follow along the same line of argument as in Section~\ref{sec:conformal} to arrive at 
\[
(1+\Omega(y+y\Hd_H y_\alpha-\Hd_H(yy_\alpha)))^2=(B-2Gy)((1+\Hd_H y_\alpha)^2+y_\alpha^2)
\qquad \text{for $\beta=0$},
\]
instead of \eqref{eqn:bernoulli}, where
\[
\Hd_H e^{in\alpha}=\begin{cases}
-i\coth(nH)e^{in\alpha}\quad &\text{if $n\neq0$} \\
0 &\text{if $n=0$},
\end{cases}
\]
instead of \eqref{def:H'}. We may assume without loss of generality $H=kh$. See, for instance, \cite{DH1} for details. Since $\Hd_H\to\H$ as $H\to\infty$ (see, for instance, \cite{CSV:constant-vorticity} for details), one can argue as in Section~\ref{sec:IFT} to deduce that for $G$ sufficiently small and $H$ sufficiently large, there exists a solution to \eqref{eqn:stream0}--\eqref{eqn:symm(z)}, replacing the last equation of \eqref{eqn:stream0} by $\psi=\text{const.}$ on $y=-H$, for which the fluid surface does not intersect itself but is overhanging. Also there exists a solution whose fluid surface intersects itself tangentially along the trough line, enclosing a bubble of air, namely a touching wave. We omit the details.

\paragraph{Point vortices.}
When vorticity is concentrated at a point in the fluid region of one period in the moving frame of reference, rather than constant throughout the fluid region, \eqref{def:w(A)} also gives an exact solution for zero gravity (see \cite[Section~4]{CR:pointvortex} for details), whereby one may be able to follow along the same line of argument as in Section~\ref{sec:IFT} for the existence of overhanging and touching waves for sufficiently weak gravity, either in the infinite or finite but sufficiently large depth. 

\paragraph{Hollow vortices.}
Last but not least we consider a hollow vortex, a bounded region of constant pressure with some nonzero circulation around it. Exact solutions have recently been found for rotating hollow vortices with $N$-fold symmetry for any integer $N\geq2$ \cite{CNK;hollow-vortex}. Interestingly, the same conformal mapping also gives exact solutions for non-rotating hollow vortices with the effects of surface tension \cite{crowdy:hollow-vortex, WC:hollow-vortex}. (To compare, Crapper's capillary waves in an irrotational flow give rise to exact solutions in constant vorticity flows without the effects of surface tension.) One then may be able to argue similarly as above for the existence of hollow vortices in fluid regions with sufficiently large but bounded area, either rotating with $N$-fold symmetry or non-rotating with the effects of surface tension.

\section*{Acknowledgments}
The work of VMH was supported by the NSF through the award DMS-2009981. The authors are also indebted to the anonymous referee for their helpful comments, and especially for identifying and resolving an error in an earlier version of Lemma~\ref{lem:elementary}.

\begin{appendix}
\section{Exact solution of \texorpdfstring{\eqref{eqn:stream0}}{Lg} with zero gravity}\label{sec:appn}

Here we detail how to correct errors in \cite{HW:rapids} and show that when $G=0$, \eqref{def:z(A)} and \eqref{def:omega(A)} solve \eqref{eqn:stream0} for $A\in[-A_{\max},A_{\max}]$ (see \eqref{def:Amax}). 
Recall that we identify $\R^2$ with $\C$ and employ the notation $z=x+iy$. Note that \eqref{def:z(A)} extends holomorphically to the lower-half plane as
\begin{equation}\label{def:z(a+ib)}
z(\alpha+i\beta;A)=\alpha+i\beta-\frac{4iAe^{-i(\alpha+i\beta)}}{1+Ae^{-i(\alpha+i\beta)}}.
\end{equation}

We begin by writing
\begin{equation}\label{def:f(appn)}
\psi=-\tfrac12\Omega y^2-y-f,
\end{equation}
so that the first, second and last equations of \eqref{eqn:stream0} become
\begin{equation}\label{eqn:f}
\begin{aligned}
&\nabla^2 f = 0 &&\text{in $D$}, \\
&f=-\tfrac12\Omega y^2-y \quad &&\text{on $S$},\\
&\nabla f\to (0,0) &&\text{as $y\to-\infty$}.
\end{aligned}
\end{equation}
Similarly as in Section~\ref{sec:w(zeta)}, we introduce
\[
\zeta=e^{-i(\alpha+i\beta)},
\]
which takes the values inside the unit disk, and we recast \eqref{eqn:f} in $\D$. (In \cite{HW:rapids}, the incorrect formula $\zeta=e^{i(\alpha+i\beta)}$ was used instead, which takes values outside $\D$. This is the source of most of the errors in \cite{HW:rapids}.) 

The Poisson integral formula gives
\[
f(\zeta)=\text{Re}\left(\frac1{2\pi i}\ointctrclockwise_{|\zeta'|=1} f(\zeta') \frac{\zeta'+\zeta}{\zeta'-\zeta}~\frac{d\zeta'}{\zeta'}\right)\qquad \text{for $|\zeta|<1$},
\]
where 
\[
f(\zeta')=-\frac12\Omega y^2-y
=-\frac{2\Omega(\zeta'^2+2A\zeta'+1)^2}{(\zeta'+A)^2(\zeta'+1/A)^2}
+\frac{2(\zeta'^2+2A\zeta'+1)}{(\zeta'+A)(\zeta'+1/A)}
\qquad\text{for $|\zeta'|=1$}
\]
by \eqref{def:z(A)} (see also \eqref{eqn:Im(w(A))}). This agrees with the corresponding formula in \cite{HW:rapids} except that the latter has a typographical error where the exponent $2$ is missing in the numerators. When $A\in[-A_{\max},A_{\max}]$, since $A_{\max}<1/2$, $f(\zeta')\frac{\zeta'+\zeta}{\zeta'-\zeta}\frac{1}{\zeta'}$ has poles at $\zeta'=-A$, $\zeta$ and $0$ in $\D$, and we evaluate the integral by means of the calculus of residues to arrive at
\[
f(\zeta)=\text{Re}\left(\frac{4\Omega}{A^2-1}\frac{(1-2A^2)\zeta^2+1}{(\zeta+1/A)^2} 
+\frac{4\zeta}{\zeta+1/A}\right)\qquad\text{for $|\zeta|<1$},
\]
so that \eqref{def:f(appn)} becomes
\begin{equation}\label{eqn:psi}
  \psi = 
  - \beta - \frac 12 \Omega y^2 
  - \frac{4\Omega}{A^2-1} \text{Re}\left( \frac{(1-2A^2)\zeta^2+1}{(\zeta+1/A)^2}\right).
\end{equation}
Differentiating \eqref{eqn:psi},
\begin{equation}\label{eqn:psi_beta}
\psi_\beta=-1-\Omega yx_\alpha
-\frac{8\Omega}{A(A^2-1)}\text{Re} \left(\frac{\zeta((1-2A^2)\zeta-A)}{(\zeta+1/A)^3}\right).
\end{equation}
All of these formulas differ from those in \cite{HW:rapids} by $\zeta\mapsto 1/\zeta$. We then use the second equation of \eqref{eqn:stream0} and make a chain rule calculation to see that
\[
|\nabla_{(x,y)}\psi|^2=\frac{\psi_\beta^2}{|z_\alpha|^2}\qquad\text{on $S$}.
\]
Inserting \eqref{eqn:psi_beta} into the third equation of \eqref{eqn:stream0}, after some algebra we arrive at 
\begin{equation}\label{eqn:dynamic'}
\left(1-2\Omega\frac{\zeta^2-2A\frac{1+A^2}{1-A^2}\zeta+1}{(\zeta+A)(\zeta+1/A)}\right)^2
=2B\left(\frac{(\zeta-A)(\zeta-1/A)}{(\zeta+A)(\zeta+1/A)}\right)^2.
\end{equation}
This agrees with \cite[(4.7)]{HW:rapids}, which remains invariant under $\zeta\mapsto1/\zeta$. A straightforward calculation reveals that \eqref{eqn:dynamic'} holds true for \eqref{def:omega(A)}.

\end{appendix}

\bibliographystyle{amsplain}
\bibliography{reference}

\providecommand{\MR}[1]{}
\providecommand{\bysame}{\leavevmode\hbox to3em{\hrulefill}\thinspace}
\providecommand{\MR}{\relax\ifhmode\unskip\space\fi MR }
\providecommand{\MRhref}[2]{%
  \href{http://www.ams.org/mathscinet-getitem?mr=#1}{#2}
}
\providecommand{\href}[2]{#2}
\begin{thebibliography}{10}

\bibitem{ADN}
S.~Agmon, A.~Douglis, and L.~Nirenberg, \emph{Estimates near the boundary for
  solutions of elliptic partial differential equations satisfying general
  boundary conditions. {II}}, Comm. Pure Appl. Math. \textbf{17} (1964),
  35--92. \MR{162050}

\bibitem{AAW:overhanging}
Benjamin~F. Akers, David~M. Ambrose, and J.~Douglas Wright, \emph{Gravity
  perturbed {C}rapper waves}, Proc. R. Soc. Lond. Ser. A Math. Phys. Eng. Sci.
  \textbf{470} (2014), no.~2161, 20130526, 14. \MR{3145074}

\bibitem{bp:stability}
M.~G. Blyth and E.~I. P\u{a}r\u{a}u, \emph{Stability of waves on fluid of
  infinite depth with constant vorticity}, J. Fluid Mech. \textbf{936} (2022),
  Paper No. A46, 14. \MR{4382091}

\bibitem{BDT1}
B.~Buffoni, E.~N. Dancer, and J.~F. Toland, \emph{The regularity and local
  bifurcation of steady periodic water waves}, Arch. Ration. Mech. Anal.
  \textbf{152} (2000), no.~3, 207--240. \MR{1764945}

\bibitem{BDT2}
\bysame, \emph{The sub-harmonic bifurcation of {S}tokes waves}, Arch. Ration.
  Mech. Anal. \textbf{152} (2000), no.~3, 241--271. \MR{1764946}

\bibitem{CSV:constant-vorticity}
Adrian Constantin, Walter Strauss, and Eugen V\u{a}rv\u{a}ruc\u{a},
  \emph{Global bifurcation of steady gravity water waves with critical layers},
  Acta Math. \textbf{217} (2016), no.~2, 195--262. \MR{3689941}

\bibitem{ce:existence}
Diego C\'{o}rdoba and Elena Di~Iorio, \emph{Existence of gravity-capillary
  crapper waves with concentrated vorticity}, 2021.

\bibitem{CEG:touching}
Diego C\'{o}rdoba, Alberto Enciso, and Nastasia Grubic, \emph{On the existence
  of stationary splash singularities for the {E}uler equations}, Adv. Math.
  \textbf{288} (2016), 922--941. \MR{3436402}

\bibitem{crapper}
G.~D. Crapper, \emph{An exact solution for progressive capillary waves of
  arbitrary amplitude}, J. Fluid Mech. \textbf{2} (1957), 532--540. \MR{91075}

\bibitem{crowdy:hollow-vortex}
D.~G. Crowdy, \emph{Exact solutions for steady capillary waves on a fluid
  annulus}, J. Nonlinear Sci. \textbf{9} (1999), no.~6, 615--640. \MR{1718171}

\bibitem{CNK;hollow-vortex}
D.~G. Crowdy, R.~B. Nelson, and V.~S. Krishnamurthy, \emph{`{H}-states': exact
  solutions for a rotating hollow vortex}, J. Fluid Mech. \textbf{913} (2021),
  R5, 11. \MR{4226915}

\bibitem{CR:pointvortex}
Darren~G. Crowdy and Johan Roenby, \emph{Hollow vortices, capillary water waves
  and double quadrature domains}, Fluid Dyn. Res. \textbf{46} (2014), no.~3,
  031424, 12. \MR{3224254}

\bibitem{DH3}
Sergey~A. Dyachenko and Vera~Mikyoung Hur, \emph{Stokes waves in a constant
  vorticity flow}, Nonlinear Water Waves, Tutorials, Schools, and Workshops in
  the Mathematical Sciences, Birkh\"auser, Cham, 2019, pp.~71--86.

\bibitem{DH2}
\bysame, \emph{Stokes waves with constant vorticity: folds, gaps and fluid
  bubbles}, J. Fluid Mech. \textbf{878} (2019), 502--521. \MR{4010456}

\bibitem{DH1}
\bysame, \emph{Stokes waves with constant vorticity: {I}. {N}umerical
  computation}, Stud. Appl. Math. \textbf{142} (2019), no.~2, 162--189.
  \MR{3915685}

\bibitem{gakhov}
F.~D. Gakhov, \emph{Boundary value problems}, Pergamon Press, Oxford-New
  York-Paris; Addison-Wesley Publishing Co., Inc., Reading, Mass.-London, 1966,
  Translation edited by I. N. Sneddon. \MR{0198152}

\bibitem{HVB}
Vera~Mikyoung Hur and Jean-Marc Vanden-Broeck, \emph{A new application of
  {C}rapper's exact solution to waves in constant vorticity flows}, Eur. J.
  Mech. B Fluids \textbf{83} (2020), 0--4. \MR{4102022}

\bibitem{HW:rapids}
Vera~Mikyoung Hur and Miles~H. Wheeler, \emph{Exact free surfaces in constant
  vorticity flows}, J. Fluid Mech. \textbf{896} (2020), R1, 10. \MR{4111654}

\bibitem{muskhelishvili}
N.~I. Muskhelishvili, \emph{Singular integral equations}, Wolters-Noordhoff
  Publishing, Groningen, 1972, Boundary problems of functions theory and their
  applications to mathematical physics, Revised translation from the Russian,
  edited by J. R. M. Radok, Reprinted. \MR{0355494}

\bibitem{OS:paper}
Hisashi Okamoto and Mayumi Shoji, \emph{Nonexistence of bifurcation from
  {C}rapper's pure capillary waves ({M}athematical {A}nalysis of {P}henomena in
  {F}luid and {P}lasma {D}ynamics)}, Res. Inst. Math. Sci. Kokyuroku Kyoto
  Univ. (1991), no.~745, 21--38.

\bibitem{OS:book}
\bysame, \emph{The mathematical theory of permanent progressive water-waves},
  Advanced Series in Nonlinear Dynamics, vol.~20, World Scientific Publishing
  Co., Inc., River Edge, NJ, 2001. \MR{1869386}

\bibitem{ss:deep}
J.~A. Simmen and P.~G. Saffman, \emph{Steady deep-water waves on a linear shear
  current}, Stud. Appl. Math. \textbf{73} (1985), no.~1, 35--57. \MR{797557}

\bibitem{spielvogel}
E.~R. Spielvogel, \emph{A variational principle for waves of infinite depth},
  Arch. Rational Mech. Anal. \textbf{39} (1970), 189--205. \MR{268742}

\bibitem{sp:steep}
A.~F. Teles~da Silva and D.~H. Peregrine, \emph{Steep, steady surface waves on
  water of finite depth with constant vorticity}, J. Fluid Mech. \textbf{195}
  (1988), 281--302. \MR{985439 (90a:76061)}

\bibitem{volpert:book}
V.~Volpert, \emph{Elliptic partial differential equations. {V}olume 1:
  {F}redholm theory of elliptic problems in unbounded domains}, Monographs in
  Mathematics, vol. 101, Birkh\"auser/Springer Basel AG, Basel, 2011.
  \MR{2778694}

\bibitem{WC:hollow-vortex}
Rudolf Wegmann and Darren Crowdy, \emph{Shapes of two-dimensional bubbles
  deformed by circulation}, Nonlinearity \textbf{13} (2000), no.~6, 2131--2141.
  \MR{1794849}

\end{thebibliography}

\end{document}